\documentclass[preprint]{elsarticle}
\usepackage{graphicx} 
\usepackage{pdfpages} 
\usepackage{tikz} 
\usepackage{amsmath,amssymb,amsthm,amscd} 
\usepackage{mathtools}
\usepackage{bm} 
\usepackage{pgfplots} 
\usepackage{array} 
\usepackage{paralist} 
\usepackage{subfig} 
\usepackage{comment} 
\usepackage{standalone} 
\usepackage{import} 
\usepackage{currfile} 
\usepackage{lineno,hyperref}
\usepackage{amsmath,amssymb}
\usepackage{mathtools}
\usepackage{amsthm}
\usepackage{bm}
\usepackage[all]{xy}

\usepackage{xcolor}

\newcommand{\dual}[1]{\langle{#1}\rangle}
\newcommand{\Div}{\operatorname{div}}
\newcommand{\Rot}{\operatorname{rot}}
\newcommand{\Grad}{\operatorname{grad}}

\newcommand{\ed}{\operatorname{d}}
\newcommand{\eD}{\operatorname{D}}
\newcommand{\eL}{\operatorname{L}}
\newcommand{\Vol}{\mathsf{vol}}

\newtheorem{theorem}{Theorem}

\newtheorem{proposition}{Proposition}

\modulolinenumbers[5]

\journal{Jounal of \LaTeX\ Templates}








\bibliographystyle{elsarticle-num}

\begin{document}
\begin{frontmatter}

\title{A finite element method to a periodic
steady-state problem for an electromagnetic field system using the space-time finite element exterior calculus}

\author[UT]{Masaru \textsc{Miyashita}\corref{CO}}
\cortext[CO]{Corresponding author}
\ead{miyashita-masaru537@g.ecc.u-tokyo.ac.jp}

\author[UT]{Norikazu \textsc{Saito}}
\address[UT]{Graduate School of Mathematical Sciences, The University of Tokyo,
Komaba 3-8-1, Meguro-ku, Tokyo 153-8914, Japan}




\begin{abstract}
This paper proposes a finite element method for solving the periodic steady-state problem for the scalar-valued and vector-valued Poisson equations, a simple reduction model of the Maxwell equations under the Coulomb gauge. Introducing a new potential variable, we reformulate two systems composed of the scalar-valued and vector-valued Poisson problems to a single Hodge-Laplace problem for the $1$-form in $\mathbb{R}^4$ using the standard de Rham complex. Consequently, we can apply the Finite Element Exterior Calculus (FEEC) theory in $\mathbb{R}^4$ directly to deduce the well-posedness, stability, and convergence. Numerical examples using the cubical element are reported to validate the theoretical results.
\end{abstract}

\begin{keyword}
Finite Element Exterior Calculus \sep Maxwell equation \sep periodic steady state analysis \sep Hodge Laplacian \sep Cubical element
\MSC[2020]
65N12\sep 
65N30\sep 
35J25
\end{keyword}

\end{frontmatter}
\section{Introduction}
\label{sec:intro}
Let $\Omega$ be a bounded Lipschitz domain in $\mathbb{R}^3$.
We consider the space-time region $Q:=(0,T) \times \Omega$ with a given $T>0$, and set the lateral boundary $\partial Q= (0,T) \times \partial \Omega$.
The target problem in this paper is the following coupling problem composed of the scalar-valued Poisson equation, vector-valued Poisson equation, and divergence-free constraint with the essential boundary conditions (see \cite{Jackson}):
\begin{subequations} 
    \label{mainVA}
  \begin{align}
    -\Div \Grad\phi &= \bar{\rho} && \mbox{ in }Q, \label{vecLap}\\
    \phi &= 0 &&\mbox{ on } \partial Q, \label{mainVAb}\\
    \Rot\Rot A &= \bar{j} &&\mbox{ in }  Q,\label{mainVAc}\\
    \Div A &= 0 &&\mbox{ in } Q,\label{mainVAd}\\
    n \times A &= 0 && \mbox{ on }  \partial Q.\label{mainVAe}
  \end{align}
\end{subequations}
Therein, a scalar-valued function $\phi$ of $t\in [0,T]$ and $x\in\overline{\Omega}$ denotes the scalar potential, and a vector-valued function $A$ of $t\in [0,T]$ and $x\in\overline{\Omega}$ denotes the vector potential.
Assume that the charge density $\bar{\rho}$ and current density $\bar{j}$, respectively, are given scalar-valued and vector-valued functions. Moreover, assume that $\bar{\rho}$ and $\bar{j}$ are continuous and $T$-periodic with respect to the time variable. That is, $\bar{\rho}(x,0) = \bar{\rho}(x,T)$ and $\bar{j}(x,0) = \bar{j}(x,T)$ for all $x \in \Omega$. The outer unit normal vector to $\partial M$ is denoted by $n$.
Here and hereinafter, we use the standard notation of the vector calculus.
It should be noticed that the function $A$ satisfies
\[
  \Rot\Rot A- \Grad\Div A=\bar{j}\mbox{ in }Q,\qquad
n\times A=0\mbox{ on }\partial Q. \qquad (\star)
\]
Therefore, we call \eqref{mainVAc}, \eqref{mainVAd} and \eqref{mainVAe} the vectoe-valued Poission equation with the divergence-free constraint.

Many components and equipments such as motors, engines, turbines,
and plasma-based etching and deposition systems are operated in periodic steady-state conditions \cite{Jackson},\cite{Plasma}.
The system \eqref{mainVA} is a simple reduction model of the Maxwell equations under the Coulomb gauge in a periodic steady-state. Therein, the gradient of the scalar potential $\phi$ denotes the static electric field, and the time derivative of the vector potential $A$ denotes the inductively electric field. The performance and lifetime of the plasma source are estimated by the induced electric field and electrostatic field, respectively. Therefore, it is important and challenging for calculating the periodic steady-state described by (\eqref{mainVA}) in the plasma equipment simulation.

Although equations \eqref{vecLap} and \eqref{mainVAc} are just linear partial equations, their numerical computations have some difficuluties.
Problem \eqref{vecLap} {and} \eqref{mainVAb} {is} the scalar-valued Poisson equation with the homogeneous Direret boundary condition at each time. In the mixed finite element method, {we} introduce an intermediate variable$E = \Grad \phi$.
{The unkowns} $\phi$ and $E$ are solved{, respectively, } in the $r$-th order polynomial space $\mathcal{P}_r(\Omega)$ and the $(r-1)$ th order {one} $\mathcal{P}_{r-1}(\Omega)^3$ as a vector-valued function.
The combination of $\mathcal{P}_{r}(\Omega)$ and $\mathcal{P}_{r-1}(\Omega)^3$ is known to cause issues such as numerical oscillation {(see \cite{bbf2013})}.
{On theother hand, }Equations \eqref{mainVAc}--\eqref{mainVAe} often {appear} in static magnetic fields problem at each time.
Equation \eqref{mainVAd} is called the Coulomb condition and is solved simultaneously with vector-valued Poisson equation {($\star$)} in a mixed formulation using the Lagrange multiplier method. The condition of \eqref{mainVAe} is called the metal boundary condition and is {a kind of the essential boundary condition} Each component of $A$ is related each other due to conservation law $\Div A$.

The theory of the \emph{Finite Element Exterior Calculus} (FEEC) gives
a useful framework to solve these problems. Actually, the scalar-valued and vector-valued Poisson problems are formulated by the Hodge Laplacian problem for the $0$-form and $1$-form, respectively, using the standard de Rham complex in $\mathbb{R}^3$ (see \cite{Tu} for example).
Applying the FEEC theory, we can derive a mixed weak formulation and constract stable finite element spaces in a coherent manner; see \cite{Arnold(2018)} \cite{afw06} \cite{afw10}. That is, the scaler-valued Poisson problem \eqref{vecLap}--\eqref{mainVAb} and vector-valued one \eqref{mainVAc}--\eqref{mainVAe} are solved by the stable finite element method \emph{separately}. We review this point in Section 2.
The purpose of this paper is to propose alternate (and somewhat new) approach. We consider $Q$ as a subset of $\mathbb{R}^4=\mathbb{R}^{1+3}$ and formulate \eqref{vecLap}-\eqref{mainVAe} as a boundary value problem for the Hodge-Laplacian problem on the 4-dimensional space-time region $Q$ with a $4$-dimensional potential as an unknown variable.
To be more specific, we introduce a new potential $u$ as a direct product $(\phi,A)$ and express it as
\[
 u=dt\wedge \phi+A
\]
using the dual basis $dt$ of the canonical basis corresponding to the time variable and the wedge product $\wedge$.
If we interpret $\phi$ and $A$ as the $0$-form and $1$-form, the new potential $u$ becomes the $1$-form in $\mathbb{R}^4$.
Moreover, two systems composed of the scalar-valued and vector-valued Poisson problems imply a single Hodge--Laplace problem for the $1$-form in $\mathbb{R}^4$.This is possible because the problem \eqref{mainVA} does not contain the time derivative term. Consequently, we can apply the FEEC theory in $\mathbb{R}^4$ directly to deduce the well-posedness, stability and convergence.
Of course, it is in general difficult to find a suitable finite element space in $\mathbb{R}^4$. We restrict our consideration to the cubical element that is a poroduct of the interval element. This enable us to extend the results for $\mathbb{R}^3$ to those for $\mathbb{R}^4$.

The FEEC theory has contributed to the development of higher-order Whitney elements \cite[Chapter 7]{Arnold(2018)}. Furthermore, FEEC is considered as a unified theory of finite element methods and one of the theoretical bases for the development of structure-preserving schemes in more complex problems \cite{structure}. For example, FEEC {gives} the structure preserving scheme in the calculation of electromagnetic field on the Vlasov-Maxwell system \cite{Kraus}. Here, another approximation theory of differential form, Discrete Exterior Calculus (DEC) \cite{DEC1} is also used for the calculation of the Vlasov-Maxwell system \cite{squire2012}. As an attempt to include the time axis, {we know} Salamon's work of Space Time FEEC \cite{Salamon}. Quenneville--Belair dealt with the time evolution problem of Maxwell equations in the 3-dimensional FEEC \cite{Quennevill}.

{This study reports new applications of the FEEC theory. }
Our novel feature{ is to utilize }a mesh in {the} 4-dimensional space-time and {solve} the Hodge Laplacian problem on the 4-dimensional periodic steady condition using the FEEC framework.

This paper is organized as follows.
We review the FEEC theory of $\mathbb{R}^3$ in \S 2. In \S 3, we derive our proposed Hilbert complex {and formulate} the main problem as Hodge--Laplacian problem. {Then, we discuss} the well{-}posedness. In \S 4, {we state} {a} numerical simulation scheme of the Hodge laplacian problem as saddle point problem. \S 5 shows the numerical examples for support of our theoretical discussion

\section*{Notation}
We use the standard Lebesgue
$L^2(\Omega,\mathbb{R}^d)$ for $d=1,\ldots,4$ and set
$L^2(\Omega)=L^2(\Omega,\mathbb{R})$. The standard Sobolev spaces are also used:
\begin{subequations} 
\label{eq:fspace}
 \begin{align}
H^1(\Omega) &=\{v\in L^2(\Omega) \mid \nabla v\in L^2(\Omega;\mathbb{R}^3)\},\label{eq:fspace1}\\
H(\Div) &=\{A\in L^2(\Omega;\mathbb{R}^3) \mid \Div A\in L^2(\Omega)\},\label{eq:fspace3}\\
H(\Rot) &=\{A\in L^2(\Omega;\mathbb{R}^3) \mid \Rot A\in L^2(\Omega;\mathbb{R}^3)\},\label{eq:fspace4}\\
\mathring{H}^1(\Omega) &=\{v\in H^1(\Omega) \mid v=0\mbox{ on }\partial\Omega\},\label{eq:fspace2}\\
\mathring{H}(\Div) &=\{A\in H(\Div) \mid A\cdot n=0\mbox{ on }\partial\Omega\},\label{eq:fspace6}\\
\mathring{H}(\Rot) &=\{A\in H(\Rot) \mid A\times n=0\mbox{ on }\partial\Omega\}.\label{eq:fspace7}
\end{align}
\end{subequations}


\section{Brief review of the FEEC in $\mathbb{R}^3$}
\label{sec:FEEC-R3}

Before studying the main target problem \eqref{mainVA}, we review the FEEC theory using the steady-state version of \eqref{mainVA} :
\begin{subequations} 
    \label{mainVA-s}
  \begin{align}
    -\Div \Grad\phi &= \bar{\rho} && \mbox{ in }\Omega, \label{vecLap-s}\\
    \phi &= 0 &&\mbox{ on } \partial \Omega, \label{mainVAb-s}\\
    \Rot\Rot A &= \bar{j} &&\mbox{ in }  \Omega,\label{mainVAc-s}\\
    \Div A &= 0 &&\mbox{ in } \Omega,\label{mainVAd-s}\\
    n \times A &= 0 && \mbox{ on }  \partial \Omega.\label{mainVAe-s}
  \end{align}
\end{subequations}
All functions in this section are supposed to be time-independent. We use the symbols $\phi$ and $A$ in the periodic steady-state problem \eqref{mainVA} and steady-state problem \eqref{mainVA-s}, since there is no fear of confusion.
This section is based on \cite[Chapters 4 and 5]{Arnold(2018)}.
In order to state the reformulation of \eqref{mainVA-s} in terms of the exterior calculus, we first recall a suitable Hilbert complex. It should be noticed that the $L^2$ de Rham complex with no boundary condision is disccussed in \cite{Arnold(2018)}. In particular, we work on the $L^2$ de Rham complex associated with $\Omega$ with boundary conditions. The base Hilbert spaces are $W^0=W^3=L^2(\Omega)$, $W^1=W^2=L^2(\Omega;\mathbb{R}^3)$. The operators are defined as $\ed^0=\Grad$, $\ed^1=\Rot$, and $\ed^2=\Div$ with domains $V^0=\mathring{H}^1(\Omega)$, $V^1=\mathring{H}(\Rot)$, $V^2=\mathring{H}(\Div)$ and $V^3={L}^2(\Omega)$, respectively. The domain complex is described as
\begin{subequations} 
 \label{eq:dc-r3-1}
 \begin{equation}
 \label{eq:dc-r3-1a}
\xymatrix{
  0 \ar[r] & V^0 \ar[r]^-{\ed^0}
  & V^1 \ar[r]^-{\ed^1}
  & V^2 \ar[r]^-{\ed^2}
  & V^3 \ar[r] & 0,
}
\end{equation}
or, equivalently,
\begin{equation}
 \label{eq:dc-r3-1b}
\xymatrix{
  0 \ar[r] & \mathring{H}^1(\Omega) \ar[r]^-\Grad
  & \mathring{H}(\Rot) \ar[r]^-{\Rot}
  & \mathring{H}(\Div) \ar[r]^-\Div
  & {L}^2(\Omega) \ar[r] & 0.
}
\end{equation}
\end{subequations}
The dual complex is given as
\begin{subequations} 
 \label{eq:dc-r3-2}
 \begin{equation}
 \label{eq:dc-r3-2a}
\xymatrix{
  0 & \ar[l] V_0^*
  & \ar[l]_-{\ed_1^*} V_1^*
  &\ar[l]_-{\ed_2^*} V_2^*
  & \ar[l]_-{\ed_3^*} V_3^*
  & \ar[l] 0
}
\end{equation}
or, equivalently,
\begin{equation}
 \label{eq:dc-r3-2b}
\xymatrix{
  0 & \ar[l] {L}^2(\Omega)
  & \ar[l]_-{-\Div} {H}(\Div)
  &\ar[l]_-{\Rot} {H}(\Rot)
  & \ar[l]_-{-\Grad} {H}^1(\Omega)
  & \ar[l] 0,
}
\end{equation}
\end{subequations}
where we have set $\ed_1^*=-\Div$, $\ed_2^*=\Rot$, and $\ed_3^*=-\Grad$ with domains $V_0^*=L^2(\Omega)$, $V_1^*={H}(\Div)$, $V_2^*={H}(\Rot)$ and $V_3^*={H}^1(\Omega)$.

All $\ed^k$ and $\ed_k^*$ are closed densely defined linear operators. Moreover, we have $\ed^{k+1}\ed^k=0$ and $\ed_{k}^*\ed_{k+1}^*=0$. That is, we have
$\mathcal{R}(\ed^{k})\subset \mathcal{N}(\ed^{k+1})$ and
$\mathcal{R}(\ed_{k+1}^*)\subset \mathcal{N}(\ed_{k}^*)$. These imply that \eqref{eq:dc-r3-1} and \eqref{eq:dc-r3-2} are Hilbert complexes (see \cite[Definition 4.1]{Arnold(2018)}). Furthermore, $\ed_{k+1}^*$ is the adjoint operator of $\ed^k$.

The $L^2$ de Rham complex has the following property (\cite[p. 38]{Arnold(2018)}).

\begin{proposition}
\label{pr:compact-r3}
$V^k\cap V_k^*$ is compactly included in $W^k$ for $k=0,1,2,3$.
\end{proposition}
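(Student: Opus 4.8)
The plan is to treat the four values of $k$ in two groups. For $k=0$ and $k=3$ the intersection space collapses to a standard Sobolev space: since $V_0^*=L^2(\Omega)$ and $V^3=L^2(\Omega)$, we have $V^0\cap V_0^*=\mathring{H}^1(\Omega)$ and $V^3\cap V_3^*=H^1(\Omega)$, and in both cases the assertion is exactly the Rellich--Kondrachov compact embedding $H^1(\Omega)\hookrightarrow L^2(\Omega)$, which holds because $\Omega$ is a bounded Lipschitz domain. Thus only $k=1$ and $k=2$ require genuine work, and these two are mirror images of one another under the interchange of $\Rot$ and $\Div$ together with the tangential and normal boundary conditions; I would carry out $k=1$ in detail and obtain $k=2$ by the same argument (or by Hodge duality).

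Fix $k=1$ and let $\{u_n\}$ be a bounded sequence in $V^1\cap V_1^*=\mathring{H}(\Rot)\cap H(\Div)$, so that $\{u_n\}$, $\{\Rot u_n\}$ and $\{\Div u_n\}$ are all bounded in $L^2$; I must extract an $L^2$-convergent subsequence. The decisive tool is a regular decomposition: every $u\in\mathring{H}(\Rot)$ splits as $u=\Grad\varphi+z$ with $\varphi\in\mathring{H}^1(\Omega)$ and a remainder $z\in H^1(\Omega;\mathbb{R}^3)$ of full $H^1$ regularity, satisfying $\Rot z=\Rot u$ and $\|z\|_{H^1}+\|\varphi\|_{H^1}\le C\|u\|_{\mathring{H}(\Rot)}$. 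Applying this to each $u_n$ gives $u_n=\Grad\varphi_n+z_n$; the remainders $\{z_n\}$ are bounded in $H^1$, and since $\Grad\varphi_n=u_n-z_n$ is bounded in $L^2$, Poincar\'e's inequality shows that $\{\varphi_n\}$ is bounded in $\mathring{H}^1(\Omega)$.

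By the Rellich--Kondrachov theorem I may pass to a subsequence along which $z_n\to z$ in $L^2(\Omega;\mathbb{R}^3)$ and $\varphi_n\to\varphi$ in $L^2(\Omega)$. It then remains to upgrade the latter to convergence of the gradients $\Grad\varphi_n$ in $L^2$. For this I would start from the identity $\|\Grad(\varphi_n-\varphi_m)\|^2=\langle\Grad(\varphi_n-\varphi_m),u_n-u_m\rangle-\langle\Grad(\varphi_n-\varphi_m),z_n-z_m\rangle$ and integrate by parts in the first inner product, using $\varphi_n\in\mathring{H}^1(\Omega)$ to discard the boundary term; this yields
\[
\|\Grad(\varphi_n-\varphi_m)\|^2=-\langle \varphi_n-\varphi_m,\Div(u_n-u_m)\rangle-\langle\Grad(\varphi_n-\varphi_m),z_n-z_m\rangle .
\]
Both terms tend to $0$ as $n,m\to\infty$: the first because $\{\Div u_n\}$ is bounded while $\varphi_n-\varphi_m\to0$ in $L^2$, and the second because $\{\Grad\varphi_n\}$ is bounded while $z_n-z_m\to0$ in $L^2$. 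Hence $\{\Grad\varphi_n\}$ is Cauchy in $L^2$, so $u_n=\Grad\varphi_n+z_n$ converges in $L^2$, which is the desired compactness. Note that the hypothesis $u\in H(\Div)$, i.e.\ membership in $V_1^*$, enters precisely through the boundedness of $\{\Div u_n\}$ in the first term.

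I expect the main obstacle to be the regular decomposition itself. On a smooth or convex $\Omega$ there is a short route through the Gaffney inequality $\|u\|_{H^1}\le C(\|u\|+\|\Rot u\|+\|\Div u\|)$ followed directly by Rellich, but this $H^1$ bound genuinely fails on a general Lipschitz domain, where fields in $\mathring{H}(\Rot)\cap H(\Div)$ need only belong to $H^{1/2}(\Omega;\mathbb{R}^3)$. Establishing the regular decomposition in this generality (equivalently, Costabel's $H^{1/2}$-regularity estimate), and disposing of the finite-dimensional space of harmonic fields that appears when $\Omega$ is topologically nontrivial---which is harmless for compactness, being finite dimensional---is where the real content lies; the remainder of the argument is only Rellich's theorem together with the elementary orthogonality computation above.
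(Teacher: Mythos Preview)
Your argument is correct. Note, however, that the paper does not actually prove this proposition: it merely records the statement and refers to \cite[p.~38]{Arnold(2018)}, where the compactness property is in turn quoted from the literature (the result goes back to Weber and Picard for Lipschitz domains). So there is no proof in the paper to compare against; you have supplied what the paper only cites.

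Your route---regular decomposition in $\mathring{H}(\Rot)$, Rellich on the $H^1$ pieces, then the Cauchy/orthogonality computation to upgrade $\varphi_n\to\varphi$ in $L^2$ to $\Grad\varphi_n\to\Grad\varphi$ in $L^2$---is one of the standard modern arguments, and your identification of the regular decomposition (equivalently Costabel's $H^{1/2}$ estimate) as the genuine analytic input is accurate. The integration-by-parts step is legitimate precisely because the version of the regular decomposition valid for $\mathring{H}(\Rot)$ on a Lipschitz domain does yield $\varphi_n\in\mathring{H}^1(\Omega)$ (and in fact $z_n\in\mathring{H}^1(\Omega)^3$ as well), and your observation that the $H(\Div)$ hypothesis enters only through the boundedness of $\{\Div u_n\}$ in the first term is exactly right. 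The case $k=2$ is indeed symmetric: $\mathring{H}(\Div)\cap H(\Rot)$, with the regular decomposition taken in $\mathring{H}(\Div)$ instead.
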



Set
\begin{equation}
 \label{eq:r3-func1}
\mathfrak{B}^k=\mathcal{R}(\ed^{k-1}),\quad
\mathfrak{Z}^k=\mathcal{N}(\ed^{k}),\quad
\mathfrak{B}_k^*=\mathcal{R}(\ed_{k+1}^*),\quad
\mathfrak{Z}_k^*=\mathcal{N}(\ed_{k}^*).
\end{equation}

An element $v\in W^k$ is called a harmonic $k$-form, if $\ed^kv=0$ and $\ed_{k}^*v=0$. The set of all harmonic $k$-form is denoted by $\mathfrak{H}^k$. We know that $\mathfrak{H}^k=\mathfrak{Z}^k\cap\mathfrak{Z}_k^*=\mathfrak{Z}^k\cap \mathfrak{B}^{k,\bot}$. It can be verified that, if $\Omega$ is simply-connected,
\begin{subequations}
  \label{eq:hh}
  \begin{align}
    \mathfrak{H}^0&=\{v\in \mathring{H}^1(\Omega)\mid \Grad v=0\}=\{0\}, \label{eq:hh0}\\
    \mathfrak{H}^1&=\{v\in =\mathring{H}(\Rot)\cap H(\Div)\mid \Rot A=-\Div A=0\}=\{0 \}.\label{eq:hh1}
  \end{align}
\end{subequations}

Moreover, we have (\cite[Theorems 4.5 and 4.6]{Arnold(2018)})) the following.

\begin{proposition}[Hodg decomposition]
\label{pr:h-decomp-r3}
We have the orthogonal decomposition
$W^k=\bar{\mathfrak{B}}^k\bigoplus \mathfrak{H}^k\bigoplus \bar{\mathfrak{B}}_k^*$ and
$V^k=\bar{\mathfrak{B}}^k\bigoplus \mathfrak{H}^k\bigoplus \mathfrak{Z}^{k\bot_V}$, where $\mathfrak{Z}^{k\bot_V}=\bar{\mathfrak{B}}^*_k\cap V_k$.
\end{proposition}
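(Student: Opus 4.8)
The plan is to derive everything from the single fact, recalled above, that $\ed_{k+1}^*$ is the adjoint of the closed densely defined operator $\ed^k$, together with the complex property $\ed^{k+1}\ed^k=0$. For a closed densely defined operator $T$ one has the $W$-orthogonal splitting $W=\overline{\mathcal{R}(T)}\oplus\mathcal{N}(T^*)$, since $\overline{\mathcal{R}(T)}^{\perp}=\mathcal{N}(T^*)$. Applying this first to $T=\ed^{k-1}$, whose adjoint is $\ed_k^*$, and then to $T=\ed^{k}$, whose adjoint is $\ed_{k+1}^*$, I obtain the two orthogonal decompositions of $W^k$
\begin{equation}
W^k=\bar{\mathfrak{B}}^k\oplus\mathfrak{Z}_k^*,\qquad
W^k=\bar{\mathfrak{B}}_k^*\oplus\mathfrak{Z}^k,
\label{eq:plan-two}
\end{equation}
so that $(\bar{\mathfrak{B}}^k)^{\perp}=\mathfrak{Z}_k^*$ and $(\mathfrak{Z}^k)^{\perp}=\bar{\mathfrak{B}}_k^*$ in $W^k$.

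Next I would assemble the three-term decomposition. The inclusion $\mathcal{R}(\ed^{k-1})\subset\mathcal{N}(\ed^{k})$ gives $\mathfrak{B}^k\subset\mathfrak{Z}^k$, and since $\mathfrak{Z}^k=\mathcal{N}(\ed^k)$ is closed in $W^k$ this yields $\bar{\mathfrak{B}}^k\subset\mathfrak{Z}^k$. Splitting the closed subspace $\mathfrak{Z}^k$ orthogonally with respect to $\bar{\mathfrak{B}}^k$ and evaluating the complement by the first decomposition in \eqref{eq:plan-two},
\[
\mathfrak{Z}^k=\bar{\mathfrak{B}}^k\oplus\bigl(\mathfrak{Z}^k\cap(\bar{\mathfrak{B}}^k)^{\perp}\bigr)
=\bar{\mathfrak{B}}^k\oplus(\mathfrak{Z}^k\cap\mathfrak{Z}_k^*)
=\bar{\mathfrak{B}}^k\oplus\mathfrak{H}^k.
\]
Substituting this into the second decomposition of \eqref{eq:plan-two} yields the asserted $W^k=\bar{\mathfrak{B}}^k\oplus\mathfrak{H}^k\oplus\bar{\mathfrak{B}}_k^*$.

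For the $V^k$ statement I would equip $V^k$ with the graph inner product $\langle u,v\rangle_V=\langle u,v\rangle_W+\langle\ed^k u,\ed^k v\rangle_W$. The decisive observation is that on $\mathfrak{Z}^k=\mathcal{N}(\ed^k)$ the term $\langle\ed^k u,\ed^k v\rangle_W$ vanishes, so the $V$- and $W$-inner products coincide there; hence the $W$-orthogonal splitting $\mathfrak{Z}^k=\bar{\mathfrak{B}}^k\oplus\mathfrak{H}^k$ is simultaneously $V$-orthogonal, while $\mathfrak{Z}^k$ is closed in $V^k$ because $\ed^k$ is bounded for the graph norm. Taking the $V$-orthogonal complement gives $V^k=\mathfrak{Z}^k\oplus\mathfrak{Z}^{k\perp_V}$, and it remains to identify $\mathfrak{Z}^{k\perp_V}=\bar{\mathfrak{B}}_k^*\cap V^k$: for $v\in V^k$ and any $z\in\mathfrak{Z}^k$ the same cancellation gives $\langle v,z\rangle_V=\langle v,z\rangle_W$, so $v\perp_V\mathfrak{Z}^k$ is equivalent to $v\perp_W\mathfrak{Z}^k$, i.e.\ to $v\in(\mathfrak{Z}^k)^{\perp}=\bar{\mathfrak{B}}_k^*$ by \eqref{eq:plan-two}. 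Combining, $V^k=\bar{\mathfrak{B}}^k\oplus\mathfrak{H}^k\oplus\mathfrak{Z}^{k\perp_V}$.

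The routine functional-analytic inputs ($\overline{\mathcal{R}(T)}^{\perp}=\mathcal{N}(T^*)$ and closedness of kernels) carry the bulk of the argument, so the step demanding the most care is the identification of $\mathfrak{Z}^{k\perp_V}$, which rests entirely on the coincidence of the $W$- and $V$-inner products on $\mathfrak{Z}^k$. I do not expect a genuine obstacle, since this is the concrete instance of the abstract Hodge decomposition for Hilbert complexes; note that Proposition \ref{pr:compact-r3} is not needed for the decomposition itself and would enter only if one additionally wanted finite-dimensionality of $\mathfrak{H}^k$ or a Poincar\'e-type estimate.
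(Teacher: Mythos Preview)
Your argument is correct and is precisely the standard proof of the abstract Hodge decomposition for Hilbert complexes. The paper itself offers no proof of this proposition; it merely quotes the result from \cite[Theorems 4.5 and 4.6]{Arnold(2018)}, so there is nothing to compare at the level of strategy---your write-up is in fact the argument that Arnold gives.

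One small point of phrasing: when you say you apply $W=\overline{\mathcal{R}(T)}\oplus\mathcal{N}(T^*)$ ``to $T=\ed^k$, whose adjoint is $\ed_{k+1}^*$'' to obtain $W^k=\bar{\mathfrak{B}}_k^*\oplus\mathfrak{Z}^k$, this is slightly mis-stated, since the displayed identity decomposes the \emph{codomain} of $T$, here $W^{k+1}$. What you actually need is the companion identity $W=\overline{\mathcal{R}(T^*)}\oplus\mathcal{N}(T)$ in the domain (equivalently, apply your formula to $T=\ed_{k+1}^*$ and use $(\ed_{k+1}^*)^*=\ed^k$). The conclusion $(\mathfrak{Z}^k)^{\perp}=\bar{\mathfrak{B}}_k^*$ you draw is correct either way, so this is cosmetic. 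Your remark that Proposition~\ref{pr:compact-r3} is not used here is also accurate: the compactness enters only for the Poincar\'e inequality (Proposition~\ref{pr:h-poin-r3}) and the finite dimensionality of $\mathfrak{H}^k$, not for the decomposition itself.
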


\begin{proposition}[Poincar{\' e} inequality]
\label{pr:h-poin-r3}
There exists a positive constant $c_P$ such that $\|u\|_{V^k}\le c_P\|\ed^ku\|$ for $u\in \mathfrak{Z}^{k\bot_V}$.
\end{proposition}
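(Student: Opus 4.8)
The plan is to argue by contradiction, exploiting the compactness property (Proposition \ref{pr:compact-r3}) together with the description of $\mathfrak{Z}^{k\bot_V}$ furnished by the Hodge decomposition (Proposition \ref{pr:h-decomp-r3}). Throughout, $\|\cdot\|$ denotes the $W^k$-norm, so that $\|u\|_{V^k}^2=\|u\|^2+\|\ed^k u\|^2$ is the graph norm.

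The first, and conceptually decisive, step is to show that every $u\in\mathfrak{Z}^{k\bot_V}$ in fact lies in $V^k\cap V_k^*$ and satisfies $\ed_k^* u=0$. Since $\mathfrak{Z}^{k\bot_V}=\bar{\mathfrak{B}}_k^*\cap V^k$, it suffices to observe that $\bar{\mathfrak{B}}_k^*\subset\mathfrak{Z}_k^*=\mathcal{N}(\ed_k^*)$: indeed $\mathfrak{B}_k^*=\mathcal{R}(\ed_{k+1}^*)\subset\mathcal{N}(\ed_k^*)$ because $\ed_k^*\ed_{k+1}^*=0$, and $\mathcal{N}(\ed_k^*)$ is closed since $\ed_k^*$ is a closed operator. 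Consequently $\ed_k^* u=0$ for $u\in\mathfrak{Z}^{k\bot_V}$, whence
\[
 \|u\|^2+\|\ed^k u\|^2+\|\ed_k^* u\|^2=\|u\|^2+\|\ed^k u\|^2=\|u\|_{V^k}^2 .
\]
This identity is the bridge that lets us feed elements of $\mathfrak{Z}^{k\bot_V}$ into the compactness statement, which is phrased for $V^k\cap V_k^*$.

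Next I would suppose the asserted inequality fails. Then there is a sequence $u_n\in\mathfrak{Z}^{k\bot_V}$ with $\|u_n\|_{V^k}=1$ and $\|\ed^k u_n\|\to 0$. By the identity above the $u_n$ are bounded in $V^k\cap V_k^*$, so Proposition \ref{pr:compact-r3} yields a subsequence, not relabelled, converging in $W^k$ to some limit $u$. Because $\ed^k$ is closed and $\ed^k u_n\to 0$, the limit satisfies $u\in V^k$ and $\ed^k u=0$, that is $u\in\mathfrak{Z}^k$. On the other hand $\bar{\mathfrak{B}}_k^*$ is closed in $W^k$ and contains every $u_n$, so $u\in\bar{\mathfrak{B}}_k^*$.

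Finally I would invoke the orthogonality built into Proposition \ref{pr:h-decomp-r3}: from $W^k=\bar{\mathfrak{B}}^k\oplus\mathfrak{H}^k\oplus\bar{\mathfrak{B}}_k^*$ one reads off $\mathfrak{Z}^k=\bar{\mathfrak{B}}^k\oplus\mathfrak{H}^k$, which is $W^k$-orthogonal to $\bar{\mathfrak{B}}_k^*$, hence $\mathfrak{Z}^k\cap\bar{\mathfrak{B}}_k^*=\{0\}$ and therefore $u=0$. But then $\|u_n\|\to 0$ and $\|\ed^k u_n\|\to 0$ force $\|u_n\|_{V^k}\to 0$, contradicting $\|u_n\|_{V^k}=1$; this contradiction establishes the existence of $c_P$. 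I expect the main obstacle to be precisely the first step: one must notice that membership in $\bar{\mathfrak{B}}_k^*$ automatically upgrades an element of $V^k$ to one of $V^k\cap V_k^*$ with vanishing codifferential, since this is exactly what makes the compactness hypothesis applicable. The remaining extraction-of-a-subsequence and closedness arguments are then routine.
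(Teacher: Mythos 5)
Your argument is correct, but it is not the argument the paper itself points to: the paper gives no proof of this proposition, quoting it from Theorems 4.5 and 4.6 of Arnold (2018), where the Poincar\'e inequality is obtained by a closed-range/bounded-inverse argument. There, one observes that $\ed^k$ is a bounded bijection from $\mathfrak{Z}^{k\bot_V}$ (a closed subspace of $V^k$, hence a Hilbert space in the graph norm) onto $\mathfrak{B}^{k+1}$, that $\mathfrak{B}^{k+1}$ is closed in $W^{k+1}$ (a consequence of the compactness property, or simply an assumption of a closed complex), and then Banach's bounded inverse theorem delivers $c_P$. You instead run a Peetre--Tartar/Rellich-type contradiction argument directly from Proposition \ref{pr:compact-r3}, and your first step --- that $\mathfrak{Z}^{k\bot_V}=\bar{\mathfrak{B}}_k^*\cap V^k\subset\mathcal{N}(\ed_k^*)$, so that elements of $\mathfrak{Z}^{k\bot_V}$ lie in $V^k\cap V_k^*$ with vanishing codifferential and the intersection norm reduces to the $V^k$-norm --- is exactly what makes the compactness hypothesis usable. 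The trade-off: the cited route is more general (it needs only a closed complex, not compactness) and isolates where closedness enters; your route needs the stronger compactness property but is self-contained at the level of the propositions the paper actually states, and it does not presuppose that $\mathfrak{B}^{k+1}$ is closed. Both arguments are equally non-constructive in the constant $c_P$.

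One repair is needed in your final step. The claim that $\mathfrak{Z}^k=\bar{\mathfrak{B}}^k\oplus\mathfrak{H}^k$ can simply be ``read off'' from $W^k=\bar{\mathfrak{B}}^k\oplus\mathfrak{H}^k\oplus\bar{\mathfrak{B}}_k^*$ is circular as written: to kill the $\bar{\mathfrak{B}}_k^*$-component of an element of $\mathfrak{Z}^k$ you need precisely $\mathfrak{Z}^k\cap\bar{\mathfrak{B}}_k^*=\{0\}$, which is the fact you are trying to establish. The fix is one line and uses only the adjointness recorded in the paper ($\ed_{k+1}^*$ is the adjoint of $\ed^k$): for $u\in\mathfrak{Z}^k$ and $w\in V_{k+1}^*$ one has $\dual{u,\ed_{k+1}^*w}=\dual{\ed^ku,w}=0$, so $\mathfrak{Z}^k\bot\mathfrak{B}_k^*$ and, by continuity of the inner product, $\mathfrak{Z}^k\bot\bar{\mathfrak{B}}_k^*$; hence the limit $u$ of your sequence, lying in both spaces, is orthogonal to itself and vanishes. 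With that substitution your contradiction closes and the proof is complete.
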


At this stage, the Hodge Laplacian $\Delta^k : W^k \rightarrow W^k$ is defined as
\begin{subequations}
\label{eq:hl-r3-1}
\begin{equation}
\label{eq:hl-r3-1a}
\Delta^k = \ed^{k-1}\ed_k^* + \ed_{k+1}^* \ed^k
\end{equation}
with its domain
\begin{equation}
\label{eq:hl-r3-1b}
D(\Delta^k) = \{ u\in V^k \cap V_k^* \mid  \ed^k u \in V_{k+1}^*,~ \ed_k^* u\in V^{k-1}\}.
\end{equation}
\end{subequations}

The Hodge Laplace problem in a strong form is described as follows:
Given $f\in W^k$, find $u\in D(\Delta^k)$ such that
 \begin{equation}
\label{eq:hlp-r3-1}
\Delta^ku=f-P_{\mathfrak{H}}f,\qquad
u\bot \mathfrak{H}^k,
\end{equation}
where $P_\mathfrak{H}$ denotes the orthogonal projection form $W^k$ onto $\mathfrak{H}^k$.

On the other hand, the Hodge Laplace problem in a primal weak form is:
Given $f\in W^k$, find $u\in V^k\cap V_k^*$ such that $u\bot \mathfrak{H}^k$ and
\begin{equation}
\label{eq:hlp-r3-2}
\dual{\ed^ku,\ed^kv}+\dual{\ed^*_ku,\ed^*_kv}=\dual{f-P_{\mathfrak{H}}f,v}\quad (\forall v \in V^k\cap V_k^*).
\end{equation}

Finally, the Hodge Laplace problem in a mixed weak form is:
Given $f\in W^k$, find $\sigma\in V^{k-1}$,
$u\in V^k$ and $p\in \mathfrak{H}^k$ such that 
\begin{subequations}
\label{eq:hlp-r3-3}
\begin{align}
\dual{\sigma,\tau}-\dual{u,\ed^{k-1}\tau}&=0 && (\forall \tau\in V^{k-1}),\label{eq:hlp-r3-3a} \\
\dual{\ed^{k-1}\sigma ,v}+\dual{\ed^ku,\ed^kv}+\dual{p,v}&=\dual{f,v}&& (\forall v \in V^k),\label{eq:hlp-r3-3b} \\
\dual{u,q}&=0 && (\forall q\in \mathfrak{H}^k).\label{eq:hlp-r3-3c}
 \end{align}
\end{subequations}

In view of \cite[Theorems 4.7, 4.8 and 4.9]{Arnold(2018)}, we know

\begin{proposition}
\label{pr:h-hl-r3}
These three formulations \eqref{eq:hlp-r3-1}, \eqref{eq:hlp-r3-2}, and \eqref{eq:hlp-r3-3} are all equivalent (see \cite[]{Arnold(2018)}).
There exists a unique solution of the Hodge Laplace problem and that the solution satisfies
\begin{equation}
\label{eq:hlp-r3-5}
\|u\|+\|\ed^ku\|+\|\ed^*_{k}u\|+\|\ed^{k-1}\ed_k^*u\| +
\|\ed_{k+1}^* \ed^ku\|+\|p\|\le c\|f\|
\end{equation}
with a positive constant depending only on the constant $c_p$ appearing in Poincare's inequality.
\end{proposition}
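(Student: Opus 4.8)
The plan is to treat \eqref{eq:hlp-r3-3} (or equivalently \eqref{eq:hlp-r3-2}) as the fundamental formulation, establish its well-posedness by a Lax--Milgram argument, and then derive the equivalence with the remaining two formulations together with the stability estimate. The three ingredients already at our disposal---the Hilbert complex structure, the Hodge decomposition (Proposition~\ref{pr:h-decomp-r3}), and the Poincar\'e inequality (Proposition~\ref{pr:h-poin-r3})---are exactly what the abstract theory of \cite[Chapter 4]{Arnold(2018)} requires, so the argument is largely a matter of assembling them in the correct order.

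First I would work with the primal weak form \eqref{eq:hlp-r3-2}. Set $X=(V^k\cap V_k^*)\cap \mathfrak{H}^{k\perp}$, equipped with the graph norm $\|v\|^2_X=\|v\|^2+\|\ed^k v\|^2+\|\ed_k^* v\|^2$, and consider the symmetric bilinear form $B(u,v)=\dual{\ed^k u,\ed^k v}+\dual{\ed_k^* u,\ed_k^* v}$. Boundedness of $B$ on $X$ is immediate, and $v\mapsto\dual{f-P_{\mathfrak{H}}f,v}$ is a bounded functional on $X$. By Lax--Milgram, the whole problem reduces to the coercivity of $B$ on $X$, that is, to the Poincar\'e--Friedrichs-type inequality $\|u\|\le c(\|\ed^k u\|+\|\ed_k^* u\|)$ for $u\in X$.

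Coercivity is the main obstacle, and I would establish it as follows. Given $u\in X$, decompose it by the $W^k$-orthogonal Hodge decomposition of Proposition~\ref{pr:h-decomp-r3}; since $u\perp\mathfrak{H}^k$ the harmonic part vanishes and $u=u_{\mathfrak{B}}+u_{\mathfrak{B}^*}$ with $u_{\mathfrak{B}}\in\bar{\mathfrak{B}}^k$ and $u_{\mathfrak{B}^*}\in\bar{\mathfrak{B}}_k^*$, and one checks these components inherit the $V^k\cap V_k^*$ regularity. Because $u_{\mathfrak{B}}\in\bar{\mathfrak{B}}^k\subset\mathfrak{Z}^k=\mathcal{N}(\ed^k)$ we have $\ed^k u=\ed^k u_{\mathfrak{B}^*}$, and since $u_{\mathfrak{B}^*}\in\bar{\mathfrak{B}}_k^*\cap V^k=\mathfrak{Z}^{k\perp_V}$ Proposition~\ref{pr:h-poin-r3} gives $\|u_{\mathfrak{B}^*}\|\le c_P\|\ed^k u\|$. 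Applying the dual counterpart of Proposition~\ref{pr:h-poin-r3} to the dual complex \eqref{eq:dc-r3-2}, together with $u_{\mathfrak{B}^*}\in\mathfrak{Z}_k^*=\mathcal{N}(\ed_k^*)$, yields $\|u_{\mathfrak{B}}\|\le c\|\ed_k^* u\|$. Adding the two bounds and using orthogonality of the decomposition proves coercivity; the availability of the dual Poincar\'e inequality is itself guaranteed by the compactness of Proposition~\ref{pr:compact-r3}, which forces the ranges $\mathcal{R}(\ed^{k-1})$ and $\mathcal{R}(\ed_{k+1}^*)$ to be closed.

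Finally I would close the three loops. To pass from \eqref{eq:hlp-r3-2} to the mixed form \eqref{eq:hlp-r3-3}, put $\sigma=\ed_k^* u$ and $p=P_{\mathfrak{H}}f$: equation \eqref{eq:hlp-r3-3a} is then the defining adjoint relation between $\ed^{k-1}$ and $\ed_k^*$, \eqref{eq:hlp-r3-3c} is $u\perp\mathfrak{H}^k$, and \eqref{eq:hlp-r3-3b} follows by testing \eqref{eq:hlp-r3-2} against $V^k$ and splitting by the Hodge decomposition, while the converse recovers $u$ uniquely. To reach the strong form \eqref{eq:hlp-r3-1}, I would test \eqref{eq:hlp-r3-2} first against $v\in\mathcal{R}(\ed^{k-1})$ and then against $v\in\mathfrak{Z}^{k\perp_V}$; the adjoint identities then show $\ed^k u\in V_{k+1}^*$ with $\ed_{k+1}^*\ed^k u\in\bar{\mathfrak{B}}_k^*$ and $\ed_k^* u\in V^{k-1}$ with $\ed^{k-1}\ed_k^* u\in\bar{\mathfrak{B}}^k$, so $u\in D(\Delta^k)$ and $\Delta^k u=f-P_{\mathfrak{H}}f$. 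The stability estimate \eqref{eq:hlp-r3-5} then follows: the zeroth- and first-order terms are controlled by coercivity and Lax--Milgram ($\|u\|_X\le c\|f\|$), while for the two second-order terms I would use that $\ed^{k-1}\ed_k^* u\in\bar{\mathfrak{B}}^k$ and $\ed_{k+1}^*\ed^k u\in\bar{\mathfrak{B}}_k^*$ are $W^k$-orthogonal, so that $\|\ed^{k-1}\ed_k^* u\|^2+\|\ed_{k+1}^*\ed^k u\|^2=\|\Delta^k u\|^2\le\|f\|^2$, and $\|p\|=\|P_{\mathfrak{H}}f\|\le\|f\|$.
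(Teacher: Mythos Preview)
Your proposal is correct and is precisely the standard argument of \cite[Theorems 4.7--4.9]{Arnold(2018)}: coercivity of the primal form on $(V^k\cap V_k^*)\cap\mathfrak{H}^{k\perp}$ via the Hodge decomposition and the two Poincar\'e inequalities, then recovery of the mixed and strong formulations, with the second-order terms in \eqref{eq:hlp-r3-5} bounded by the orthogonality $\|\ed^{k-1}\ed_k^* u\|^2+\|\ed_{k+1}^*\ed^k u\|^2=\|f-P_{\mathfrak{H}}f\|^2$. The paper itself gives no proof of this proposition at all---it simply invokes those theorems of Arnold---so your sketch in fact supplies more detail than the paper does, along the same route.
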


Now, we turn to our steady-state problem \eqref{mainVA-s}. First, the problem \eqref{vecLap-s} and \eqref{mainVAb-s} for finding $\phi$ is nothing but the the Hodge Laplace problem for $k=0$.
For convenience, we assume that $\Omega$ is simply-connected.
Since
$\mathfrak{H}^0=\{0\}$,
\eqref{eq:hlp-r3-1} implies that
\[
 \Delta^0\phi=\ed_1^*\ed^0\phi=-\Div \Grad \phi=\bar{\rho},\quad \phi\in V^0=\mathring{H}^1(\Omega),\quad \ed^0\phi=\Grad u\in V_1^*=H(\Div).
\]

To interpret \eqref{mainVAc-s}--\eqref{mainVAe-s} for finding $A$ in the framework of the Hodge Laplacian problem, we introduce the following problem.

\smallskip

\noindent \textbf{$\mathfrak{B}_k^*$ problem:} Given $g\in \mathfrak{B}_k^*$, find $u\in \mathfrak{B}_k^*$ such that
 \begin{equation}
\label{eq:hlp-r3-11}
\ed_{k+1}^* \ed^{k}u=g.
\end{equation}

In essentially the same way as the proof of \cite[Theorem 4.12]{Arnold(2018)}, we prove

\begin{proposition}
\label{pr:h-bk-r3}
Let $u\in D(\Delta^k)$ be the unique solution of \eqref{eq:hlp-r3-2} for $g\in \mathfrak{B}_k^*$. Then, the function $u$ is in $\mathfrak{B}_k^*$ and it is a solution of \eqref{eq:hlp-r3-11}.
\end{proposition}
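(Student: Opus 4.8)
The plan is to exploit that the data $g$ lies in $\mathfrak{B}_k^*=\mathcal{R}(\ed_{k+1}^*)$, which by the orthogonality of the Hodge decomposition (Proposition \ref{pr:h-decomp-r3}) is orthogonal to both $\mathfrak{H}^k$ and $\mathfrak{B}^k$. In particular $P_{\mathfrak{H}}g=0$, so by the equivalence of the three formulations (Proposition \ref{pr:h-hl-r3}) the solution $u$ of \eqref{eq:hlp-r3-2} satisfies the strong form \eqref{eq:hlp-r3-1}, i.e. $\ed^{k-1}\ed_k^*u+\ed_{k+1}^*\ed^ku=g$ together with $u\bot\mathfrak{H}^k$. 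Thus the whole statement reduces to showing that the first term vanishes, i.e. $\ed_k^*u=0$; the identity \eqref{eq:hlp-r3-11} then follows immediately, and the membership $u\in\mathfrak{B}_k^*$ will be read off from $\ed_k^*u=0$.

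To prove $\ed_k^*u=0$ I would argue through the mixed form \eqref{eq:hlp-r3-3}, which by Proposition \ref{pr:h-hl-r3} has the same solution $u$, writing $\sigma=\ed_k^*u\in V^{k-1}$ and $p=P_{\mathfrak{H}}g=0$. First I would test \eqref{eq:hlp-r3-3b} with $v=\ed^{k-1}\sigma$; this is admissible since $\ed^{k-1}\sigma\in\mathfrak{B}^k\subset\mathfrak{Z}^k\subset V^k$. Using $\ed^k\ed^{k-1}\sigma=0$, $p=0$, and the orthogonality $\dual{g,\ed^{k-1}\sigma}=0$ (this is exactly where $g\in\mathfrak{B}_k^*$ enters), the equation collapses to $\|\ed^{k-1}\sigma\|^2=0$, so $\ed^{k-1}\sigma=0$. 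Then testing \eqref{eq:hlp-r3-3a} with $\tau=\sigma$ gives $\|\sigma\|^2=\dual{u,\ed^{k-1}\sigma}=0$, whence $\sigma=\ed_k^*u=0$ and therefore $\ed_{k+1}^*\ed^ku=g$, which is \eqref{eq:hlp-r3-11}.

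For the membership, from $\ed_k^*u=0$ we have $u\in\mathfrak{Z}_k^*=\mathcal{N}(\ed_k^*)$. Since $\ed_k^*$ is the adjoint of $\ed^{k-1}$, $\mathfrak{Z}_k^*=(\bar{\mathfrak{B}}^k)^{\bot}=\mathfrak{H}^k\oplus\bar{\mathfrak{B}}_k^*$ by the Hodge decomposition. Combining this with $u\bot\mathfrak{H}^k$ gives $u\in\bar{\mathfrak{B}}_k^*$, and the compactness property (Proposition \ref{pr:compact-r3}), which forces the ranges to be closed, yields $\bar{\mathfrak{B}}_k^*=\mathfrak{B}_k^*$ and hence $u\in\mathfrak{B}_k^*$.

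The main obstacle I anticipate is precisely the step $\ed_k^*u=0$. Attacking it directly in the primal form \eqref{eq:hlp-r3-2} by Hodge-decomposing $u=u_{\mathfrak{B}}+u_{\mathfrak{B}^*}$ is awkward, because the components need not separately lie in $V_k^*$, so one cannot simply test with $v=u_{\mathfrak{B}}$. Passing to the mixed form, where $\sigma$ appears as an independent variable in $V^{k-1}$ and is characterised weakly, sidesteps this domain issue and is the reason the argument mirrors the proof of \cite[Theorem 4.12]{Arnold(2018)}. The only other point requiring care is the orthogonality $g\bot\mathfrak{B}^k$, which is what distinguishes data in $\mathfrak{B}_k^*$ from general $f$ and makes the right-hand side vanish on $\mathfrak{B}^k$.
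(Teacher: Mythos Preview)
Your argument is correct. The paper itself does not spell out a proof of this proposition; it only points to \cite[Theorem~4.12]{Arnold(2018)}. However, in \S\ref{subse:main} the paper carries out the analogous $\mathcal{B}_1^*$ argument in the four-dimensional setting, and there it uses a different route: since $F\in\mathcal{B}_1^*$ one writes $F=\eD_2^*\tilde F$, solves the Hodge--Laplace problem one level up, $\eL^2 v=\tilde F$, sets $u=\eD_2^* v$, and then checks by direct computation (using $\eD_1^*\eD_2^*=0$) that $\eL^1 u=F$ and $u\in\mathcal{B}_1^*$ automatically; uniqueness then identifies this $u$ with the given solution. Translated back to level $k$ in three dimensions this is the ``lift and push down'' construction.

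Your approach is instead to stay at level $k$: you test the mixed system with $v=\ed^{k-1}\sigma$ and then $\tau=\sigma$ to force $\sigma=\ed_k^*u=0$, and read off $u\in\mathfrak{B}_k^*$ from the Hodge decomposition and closedness of ranges. This is slightly more direct for the statement as phrased (you start from the given solution rather than building a candidate and invoking uniqueness), and it makes the role of the hypothesis $g\perp\mathfrak{B}^k$ very transparent. The paper's constructive route, on the other hand, gives the membership $u\in\mathfrak{B}_k^*$ for free without needing to appeal separately to closedness of $\mathfrak{B}_k^*$. Both are standard and equally valid.
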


The problem \eqref{mainVAc-s}--\eqref{mainVAe-s} for finding $A$ is equivalent to the $\mathfrak{B}_1^*$ problem as long as $\bar{j}$ is taken from $\mathfrak{B}_1^*$. That is, if $\bar{j}$ is given as $\bar{j}=\Rot \tilde{j}$ for some $\tilde{j}\in H(\Rot)$, we have
\[
 \ed_{2}^* \ed^{1}A=\Rot\Rot A=g,\quad A\in V_1=\mathring{H}(\Rot),\quad
\ed_1A=\Rot u\in V_2^*= H(\Rot),
\]
and
\[
\ed_1^*A=-\Div A=0.
\]
The last assertion follows from $\mathfrak{B}_1^*=\mathcal{R}(d_2^*)\subset\mathcal{N}(\ed_1^*)$.

In summary, the steady-state problem \eqref{mainVA-s} is formulated as
\begin{subequations}
    \label{mainVA-ss}
  \begin{gather}
\phi\in V^0, \quad \ed^0\phi\in V_1^*, \quad
    \ed_1^*\ed^0\phi = \bar{\rho} ,\label{mainVAa-ss}\\
A\in V_1,\quad \ed_1A\in V_2^*,\quad \ed_{2}^* \ed^{1} A = \bar{j},\quad \ed_1^*A = 0. \label{mainVAc-ss}
  \end{gather}
\end{subequations}

Propositions \ref{pr:h-hl-r3} and \ref{pr:h-bk-r3} guarantee that
there exists a unique solution $(\phi,A)\in \mathring{H}^1(\Omega)\times \mathring{H}(\Rot)$ of \eqref{mainVA-ss}, if $\bar{\rho}\in L^2(\Omega)$ and $\bar{j}=\Rot \tilde{j}$ for some $\tilde{j}\in H(\Rot)$.

We proceed to the finite element approximation of the Hodge Laplace problem in the mixed weak form \eqref{eq:hlp-r3-3}. We are interested in the case $k=0$ and $k=1$.
Let $V_h^k$ be a finite dimensional subspace of $V^k$. Then, we have
\[
 \mathfrak{Z}_h^k=\{v\in V_h^k\mid \ed^kv=0\}\subset \mathfrak{Z}^k,\quad
 \mathfrak{B}_h^{k+1}=\{\ed^kv\mid v\in V_h^k\}\subset \mathfrak{B}^{k+1}.
\]
On the other hand, for the discrete harmonic forms
\[
 \mathfrak{H}_h^{k}=\{v\in \mathfrak{Z}_h^{k}
\mid v\bot \mathfrak{B}_h^{k}\},
\]
we do not know whether
\[
 \mathfrak{H}_h^{k}\subset \mathfrak{H}^{k}
\]
holds true or not. However, we know in our setting
\begin{equation}
\label{eq:hh10}
 \mathfrak{H}^{0}=\mathfrak{H}_h^{0}=
 \mathfrak{H}^{1}=\mathfrak{H}_h^{1}=\{0\}
\end{equation}

The Galerkin approximation for \eqref{eq:hlp-r3-3} reads as follows: Given $f\in W^k$, find $\sigma_h\in V_h^{k-1}$,
$u_h\in V_h^k$ and $p_h\in \mathfrak{H}_h^k$ such that
\begin{subequations}
\label{eq:hlp-gakerin-1}
\begin{align}
\dual{\sigma_h,\tau}-\dual{u_h,\ed^{k-1}\tau}&=0 && (\forall \tau\in V_h^{k-1}),\label{eq:hlp-gakerin-1a} \\
\dual{\ed^{k-1}\sigma_h ,v}+\dual{\ed^ku_h,\ed^kv}+\dual{p_h,v}&=\dual{f,v}&& (\forall v \in V_h^k),\label{eq:hlp-gakerin-1b} \\
\dual{u_h,q}&=0 && (\forall q\in \mathfrak{H}_h^k).\label{eq:hlp-gakerin-1c}
 \end{align}
\end{subequations}

In particular, if $k=0$, \eqref{eq:hlp-gakerin-1} implies
: Given $\bar{\rho}\in W^0$, find $u_h\in V_h^k$ such that
\begin{equation}
\dual{\ed^0u_h,\ed^0v}=\dual{\bar{\rho},v}\qquad  (\forall v \in V_h^0).
\label{eq:hlp-gakerin-1b}
\end{equation}
If $k=1$, \eqref{eq:hlp-gakerin-1} implies
: Given $\bar{j}\in \mathfrak{B}_1^*$, find $\sigma_h\in V_h^{0}$,
$u_h\in V_h^1$  such that
\begin{subequations}
\label{eq:hlp-gakerin-2}
\begin{align}
\dual{\sigma_h,\tau}-\dual{u_h,\ed^{0}\tau}&=0 && (\forall \tau\in V_h^{0}),\label{eq:hlp-gakerin-2a} \\
\dual{\ed^{0}\sigma_h ,v}+\dual{\ed^1u_h,\ed^1v}&=\dual{\bar{j},v}&& (\forall v \in V_h^1).\label{eq:hlp-gakerin-2b}
 \end{align}
\end{subequations}
We make the following conditions on $V_h^k$:
\begin{description}
 \item[(H1) Subcomplex property.]We have
$\ed^{k-1}V_h^{k-1} \subset V_h^{k}$ and
$\ed^kV_h^k \subset V_h^{k+1}$. In other words,
 \begin{equation}
\label{eq:sca}
\xymatrix{
  V_h^{k-1} \ar[r]^-{\ed^{k-1}}
  & V_h^k \ar[r]^-{\ed^k}
  & V_h^{k+1}
}
\end{equation}
is a subcomplex of \eqref{eq:dc-r3-1}
 \item[(H2) Existence of bounded cohain projections.]
There exists a linear operator $\pi_h^k:V^k \rightarrow V_h^{k}$ such that
$\ed^k\pi_h^k=\pi_h^{k+1}d^k$ and $\|\pi_h^kv\|_{V^k}\le c\|v\|_{V^k}$, and the restriction of $\pi_h^k$ to $V_h^k$ is the identity on $V_h^k$. In other words, we have the following commuting diagram relating the complex $(V^k,\ed^k)$ to the subcomplex $(V_h^k,\ed^k)$:
 \[
\xymatrix{
   V^k \ar[r]^{\ed^k} \ar[d]^{\pi_h^{k}} &
   V^{k+1} \ar[d]^{\pi_h^{k+1}}
\\
   V_h^k \ar[r]^{\ed^k}&
   V_h^{k+1}
}.
\]
$\pi_h^k$ is assumed that is satisfied bounded with uniformly in h and the commutativity $\pi_h^{k+1} \ed^k = \ed^{k}\pi_h^k$.
 \item[(H3) Approximation property.]
 \begin{equation}
\label{eq:ap1}
\lim_{h\to 0}\inf_{v\in V_h^k}\|w-v\|_{V^k}=0\qquad (w\in V^k).
\end{equation}
\end{description}
Under these assumption, we prove (see \cite[Theorems 5.4 and 5.5]{Arnold(2018)}).
\begin{proposition}
\label{pr:gal-r3}
 Assume that
\textup{(H1)},
\textup{(H2)}, and
\textup{(H3)} are all satisfied.
Then, \eqref{eq:hlp-gakerin-2} is stable in the sense that
\[
 \|\sigma_h\|+\|u_h\|\le c\|\bar{j}\|
\]
holds true with a positive constant $c$ which is independent of $h$. Moreover, we have
  \begin{equation}
\label{eq:err111}
\lim_{h\to 0}\left(\| \sigma - \sigma_h \|_{V^{0}} +\| u-u_h\|_{V^1}\right)    =0.
  \end{equation}
For \eqref{eq:hlp-gakerin-1b}, we obtain the same conclusions.
\end{proposition}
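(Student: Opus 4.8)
\textit{Proof proposal.}
The plan is to obtain both the stability bound and the convergence \eqref{eq:err111} from a single uniform discrete inf-sup (Babu\v{s}ka--Brezzi) condition for the saddle-point form associated with \eqref{eq:hlp-gakerin-2}. The decisive simplification in the present setting is \eqref{eq:hh10}: since $\mathfrak{H}^1=\mathfrak{H}_h^1=\{0\}$, the multiplier $p_h$ and the test form $q$ disappear from \eqref{eq:hlp-gakerin-1}, so the usual FEEC complication of estimating the gap between $\mathfrak{H}^k$ and $\mathfrak{H}_h^k$ is absent and the discrete problem collapses exactly to the two-field system \eqref{eq:hlp-gakerin-2}. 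I would therefore study the bilinear form
\[
B\big((\sigma,u),(\tau,v)\big)=\dual{\sigma,\tau}-\dual{u,\ed^0\tau}+\dual{\ed^0\sigma,v}+\dual{\ed^1u,\ed^1v}
\]
on $(V_h^0\times V_h^1)^2$ and show that it satisfies an inf-sup lower bound with constant $\gamma>0$ independent of $h$.

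First I would establish the $h$-uniform discrete Poincar\'e inequality: there is $c_P'>0$, independent of $h$, with $\|v_h\|_{V^1}\le c_P'\|\ed^1v_h\|$ for every $v_h$ in the discrete coexact space $\mathfrak{Z}_h^{1\bot}$. This is the one genuinely nontrivial ingredient, and it is exactly where the bounded cochain projection (H2) enters. Given $v_h\in\mathfrak{Z}_h^{1\bot}$, I would produce the continuous coexact form $w\in\mathfrak{Z}^{1\bot_V}$ with $\ed^1w=\ed^1v_h$, estimate $\|w\|_{V^1}\le c_P\|\ed^1v_h\|$ by the continuous Poincar\'e inequality (Proposition \ref{pr:h-poin-r3}), and transport the bound back onto $V_h^1$ through $\pi_h^1$, using the commuting relation $\ed^1\pi_h^1=\pi_h^2\ed^1$ from (H2), the identity property of $\pi_h^1$ on $V_h^1$, and the uniform bound $\|\pi_h^1\|_{\mathcal{L}(V^1,V^1)}\le c$. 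The subcomplex property (H1) guarantees $\ed^0V_h^0\subset V_h^1$, hence $\mathfrak{B}_h^1\subset\mathfrak{B}^1$, which keeps the discrete Hodge decomposition (Proposition \ref{pr:h-decomp-r3}) consistent with the continuous one.

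Next I would verify the inf-sup bound. For a nonzero pair $(\sigma_h,u_h)\in V_h^0\times V_h^1$, the diagonal choice $(\tau,v)=(\sigma_h,u_h)$ already gives $B((\sigma_h,u_h),(\sigma_h,u_h))=\|\sigma_h\|^2+\|\ed^1u_h\|^2$, because the off-diagonal terms cancel by symmetry of the inner product. Writing $u_h=\ed^0\rho_h+u_h^{\bot}$ with $u_h^{\bot}\in\mathfrak{Z}_h^{1\bot}$ (no harmonic part, by \eqref{eq:hh10}), the discrete Poincar\'e inequality controls $\|u_h^{\bot}\|_{V^1}\le c_P'\|\ed^1u_h\|$ by this same quantity. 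To recover the remaining graph-norm pieces $\|\ed^0\sigma_h\|$ and $\|\ed^0\rho_h\|$, I would add a perturbation test pair, for instance $v=\ed^0\sigma_h$ together with a compensating $\tau$, and combine everything with suitable weights. Summing produces $B((\sigma_h,u_h),(\tau,v))\ge\gamma(\|\sigma_h\|_{V^0}+\|u_h\|_{V^1})(\|\tau\|_{V^0}+\|v\|_{V^1})$ with $\gamma$ depending only on $c_P'$. The stability estimate $\|\sigma_h\|+\|u_h\|\le c\|\bar{j}\|$ then follows at once, since $v\mapsto\dual{\bar{j},v}$ is a bounded functional on $V_h^1$.

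Finally, convergence is a consequence of this uniform inf-sup condition together with the boundedness of $B$: the abstract Babu\v{s}ka--Brezzi theory yields the quasi-optimal estimate $\|\sigma-\sigma_h\|_{V^0}+\|u-u_h\|_{V^1}\le C\inf_{(\tau,v)\in V_h^0\times V_h^1}(\|\sigma-\tau\|_{V^0}+\|u-v\|_{V^1})$ with $C$ independent of $h$, and the approximation property (H3) drives the infimum to $0$, giving \eqref{eq:err111}. The case $k=0$ in \eqref{eq:hlp-gakerin-1b} is simpler: there the form $\dual{\ed^0u_h,\ed^0v}$ is coercive on $V_h^0\subset\mathring{H}^1(\Omega)$ by the Poincar\'e inequality, so Lax--Milgram and the standard C\'ea argument give stability and convergence directly. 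I expect the main obstacle to be precisely the $h$-uniformity of the discrete Poincar\'e constant; once that is secured via (H2), everything downstream is routine.
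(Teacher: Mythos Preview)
Your proposal is correct and follows exactly the route the paper relies on: the paper does not supply its own argument for this proposition but simply invokes \cite[Theorems 5.4 and 5.5]{Arnold(2018)}, and your sketch---discrete Poincar\'e via the bounded cochain projection (H2), the resulting $h$-uniform inf--sup bound for $B$, and quasi-optimality plus (H3) for convergence---is precisely the content of those theorems specialized to $k=1$ with $\mathfrak{H}^1=\mathfrak{H}_h^1=\{0\}$.
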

%
Although we do not recall here, many concrete examples of $V_h^k$ satisfying (H1), (H2) and (H3) are known.
\section{Space-time 4D formulation}
\label{SPFormulation}
In the previous section, we reviewed the FEEC frame work using the $L^2$ de Rham complex in $\mathbb{R}^3$.
Based on these preliminaries, we introduce a Hilbert complex in the space-time region in $\mathbb{R}^4$ to handle scalar-valued and vector-valued potentials simultaneously. We then study the proposed Hilbert complex and the Hodge Laplacian and verify that they give a useful framework to solve the periodic steady-state problem \eqref{mainVA-s}.
\subsection{Hilbert complex on $Q$}
In our strategy, the main problem \eqref{mainVA-s} is formulated as a boundary value problem of the Hodge Laplacian problem in the space-time region $Q$. To this end, we write
\[
 \mathbb{R}^4=\mathbb{R}\times \mathbb{R}^3=\{(x_0,x_1,x_2,x_3)
\mid x_0=t\in\mathbb{R},~(x_1,x_2,x_3)\in\mathbb{R}^3\}
\]
and treat $Q=(0,T)\times \Omega$ as a subset of $\mathbb{R}^4$.
We use the dual basis $dx_0,dx_1,dx_2,dx_3$ of the canonical basis $e_0,e_1,e_2,e_3$ and often write $dt=dx_0$.
We introduce a potential $u$ in $Q$ that is a direct sum of the scalar-valued potential $\phi \in W^0=L^2\Lambda_0$ and the vector potential vector-valued potential $A \in W^1=L^2\Lambda_1$ and express it as
\[
 u = dx_0 \wedge \phi+ A,
\]
where the $\wedge$ denotes the wedge product.
Consequently, the potential $u$ is understood as a differential $1$-form on $Q$. We recall that $\phi$ and $A$ are functions of $t$ and $x$. Therefore,
$\phi\in W^0$ should be precisely understood as $\phi(x_0,\cdot)\in W^0$ for any $x_0\in (0,T)$. Similarly,
$A\in W^1$ should be precisely understood as $A(x_0,\cdot)\in W^1$ for any $x_0\in (0,T)$. Below we will employ the abbreviation $\phi \in W^0$ and $A \in W^1$ to express these relations.
Further, the force field $F$ in $Q$ is defined as a direct sum of the electric field $E \in W^1=L^2\Lambda_1$ and the magnetic field $B \in W^2=L^2\Lambda_2$ as $F = dx_0 \wedge E + B$, which is a differential $2$-form on $Q$.

To treat a differential $k$-form in $Q$ of the form $dx_0 \wedge \omega + \omega'$ in a coherent way, we introduce a subset $M_k$ of
a vector space of all differential $k$-forms on $Q$ in the following way. For the time being, we take no care about the smoothness and integrability of differential forms. We set
\begin{subequations}
\label{eq:M}
\begin{align}
 M^0&=W^0 ,\label{eq:M0}\\
 M^{k}&=\{u=dx_0\wedge \omega_{k-1}+\omega_{k}\mid \omega_{k-1}\in W^{k-1},\omega_{k}\in W^{k}\} \qquad (k=1,2,3),\label{eq:Mk}\\
 M^{4}&=\{u=dx_0\wedge \omega_3\mid \omega_3\in W^3\}. \label{eq:M4}
\end{align}
Herein, we recall that the abbreviation $\omega_k=\omega_k(x_0,\cdot)\in W^0$ for any $x_0\in (0,T)$ is employed. Their inner products are defined as
\begin{equation}
  \dual{u,v}_{k}=\int_Q \dual{\omega_{k-1},\eta_{k-1}}\Vol
+\int_Q \dual{\omega_{k},\eta_{k}}\Vol
\qquad (k=0,\ldots,4),\label{eq:ipMk}
\end{equation}
where
$u=dx_0\wedge \omega_{k-1}+\omega_{k},v=dx_0\wedge \eta_{k-1}+\eta_{k}\in M^{k}$ (with $\omega_{-1}=\eta_{-1}=\omega_{4}=\eta_{4}=0$) and $\Vol=dx_0 \wedge dx_1\wedge dx_2\wedge dx_3$ stands for the volume form on $\mathbb{R}^4$. The norm is defined as
\begin{equation}
  \|u\|_{k}^2=\int_Q \dual{\omega_{k-1},\omega_{k-1}}\Vol
+\int_Q \dual{\omega_{k},\omega_{k}}\Vol
\qquad (k=0,\ldots,4).\label{eq:normMk}
\end{equation}
\end{subequations}
In generally speaking, Minkowsky inner product in the space time leads to indefinite norm and the space is not always Hilbert space \cite{relativity}.
However, the inner product we introduced in 4d space time by enetention of 3d defferntial inner product holds positive defined property.
Then, we can introduce
\begin{equation}
L^2M^k=\{u\in M^k \mid \|u\|_{k}<\infty\}.
\end{equation}
Furthermore, we set for $k=0,\ldots,4$
\begin{subequations}
\label{eq:HM}
\begin{align}
 HM^{k}&=\{u=dx_0\wedge \omega_{k-1}+\omega_{k}\mid \omega_{k-1}\in V^{k-1},\omega_{k}\in V^{k}\},\label{eq:HMk} \\
 H^*M^{k}&=\{u=dx_0\wedge \omega_{k-1}+\omega_{k}\mid \omega_{k-1}\in V_{k-1}^*,\omega_{k}\in V_{k}^*\},\label{eq:HMkd}
\end{align}
\end{subequations}
where $\omega_{-1}=\eta_{-1}=\omega_{4}=\eta_{4}=0$.
We now state the definition of linear operators
$\eD^k$ of $L^2M^k\to L^2M^{k+1}$ with its domain $HM^k$ and $\eD_k^*$ of $L^2M^{k}\to L^2M^{k-1}$ with its domain $H^*M^k$:
\begin{subequations}
\label{eq:defDD}
\begin{align}
\eD^k u&=
dx_0 \wedge (\ed^{k-1}\omega_{k-1})+\ed^{k}\omega_{k},\label{eq:defDa}\\
\eD_k^* u&=
dx_0 \wedge (\ed_{k-1}^*\omega_{k-1})+\ed_{k}^*\omega_{k},\label{eq:defDb}
\end{align}
\end{subequations}
where
$u = dx_0 \wedge \omega_{k-1} + \omega_{k}\in HM^k$ or $u\in H^*M^k$.
Using these operators, the spaces $HM^{k}$ and $H^*M^{k}$ for $k\ge 1$ are expressed alternately as
\begin{subequations}
\label{eq:HM2}
\begin{align}
 HM^{k}&=\{u\in L^2M^{k} \mid \eD^k u\in L^2M^{k+1}\},\label{eq:HM2k} \\
 H^*M^{k}&=\{u\in L^2M^k \mid \eD_{k}^*u\in L^2M^{k-1}\}.\label{eq:HM2kd}
\end{align}
\end{subequations}
These spaces are Hilbert spaces equipped with the following inner products and norms:
\begin{subequations}
\label{eq:HM-norms}
\begin{align}
\dual{u,v}_{HM^k}&=\dual{u,v}_{k}+\dual{\eD^ku,\eD^kv}_{k+1},
\label{eq:HM-norms-a}\\
\|u\|_{HM^k}^2 &=\|u\|_{k}^2+\|\eD^ku\|_{k+1}^2,\label{eq:HM-norms-c}\\
\dual{u,v}_{H^*M^k}&=\dual{u,v}_{k}+\dual{\eD_k^*u,\eD_k^*v}_{k-1},\label{eq:HM-norms-b}\\
\|u\|_{H^*M^k}^2 &=\|u\|_{k}^2+\|\eD_k^*u\|_{k+1}^2.\label{eq:HM-norms-d}
\end{align}
\end{subequations}
The inner products and norms for $HM^0$ and $H^*M^0$ are defined with obvious modifications.
Moreover, $\eD^k$ and $\eD_k^*$ are densely defined closed operators. These properties follows directly from the corresponding properties of $\ed^k$ and $\ed_k^*$.
Then, as a direst consequence of \eqref{eq:dc-r3-1} and \eqref{eq:dc-r3-2}, we have a Hilbert complex,
\begin{equation}
 \label{eq:comp-M-1}
\xymatrix{
  0 \ar[r]
& HM^0\ar[r]^-{\eD^0}
& HM^1\ar[r]^-{\eD^1}
& HM^2\ar[r]^-{\eD^2}
& HM^3\ar[r]^-{\eD^3}
& HM^4 \ar[r] & 0,
}
\end{equation}
and the dual complex,
\begin{equation}
 \label{eq:comp-M-2}
\xymatrix{
 0 & \ar[l] H^*M^0
  & \ar[l]_-{\eD_1^*} H^*M^1
  &\ar[l]_-{\eD_2^*} H^*M^2
  & \ar[l]_-{\eD_3^*} H^*M^3
  & \ar[l]_-{\eD_4^*} H^*M^4
  & \ar[l] 0.
}
\end{equation}
In both complexes, base Hilbert spaces are $L^2M^k$. In particular, we have
$\eD^{k+1}\eD^k=0$ and $\eD_{k}^*\eD_{k+1}^*=0$ as readily obtainable consequences of $\ed^{k+1}\ed^k=0$ and $\ed_{k}^*\ed_{k+1}^*=0$.
In view of Proposition \ref{pr:compact-r4},
$V^{k-1}\cap V_{k-1}^*$ and
$V^{k}\cap V_{k}^*$ are compactly included in $W^{k-1}$ and $W^{k}$, respectively. Therefore, we obtain the following.
\begin{proposition}
\label{pr:compact-r4}
$HM^k\cap H^*M^k$ is compactly included in $L^2M^k$ for $k=0,\ldots,4$.
\end{proposition}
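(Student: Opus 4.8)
The plan is to reduce the four-dimensional statement to the three-dimensional compactness already recorded in Proposition~\ref{pr:compact-r3} by exploiting the strictly component-wise nature of every space and norm involved. (The reference to Proposition~\ref{pr:compact-r4} in the sentence immediately preceding the statement appears to be a typo for Proposition~\ref{pr:compact-r3}, which is the input I actually want to use.) First I would set up the isometric identification. Writing $u=dx_0\wedge\omega_{k-1}+\omega_k$ and recalling the norm definitions \eqref{eq:normMk} and \eqref{eq:HM-norms-c} together with the operator definitions \eqref{eq:defDD}, every norm at stake splits as a sum over the two components $\omega_{k-1}$ and $\omega_k$, and $\eD^k,\eD_k^*$ act diagonally through $\ed^{k-1},\ed^k$ and $\ed_{k-1}^*,\ed_k^*$. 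Hence the assignment $u\mapsto(\omega_{k-1},\omega_k)$ identifies $HM^k\cap H^*M^k$ isometrically with the product of $L^2((0,T);V^{k-1}\cap V_{k-1}^*)$ and $L^2((0,T);V^k\cap V_k^*)$, and identifies $L^2M^k$ with $L^2((0,T);W^{k-1})\times L^2((0,T);W^k)$, each factor carrying the space-time inner product $\int_Q=\int_0^T\!\!\int_\Omega$. It therefore suffices to treat a single index $j\in\{k-1,k\}$ and to prove that $L^2((0,T);V^j\cap V_j^*)$ is compactly included in $L^2((0,T);W^j)$.

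The natural attempt is then a fibre-wise use of Proposition~\ref{pr:compact-r3}. Given a bounded sequence in $HM^k\cap H^*M^k$, for almost every fixed $x_0\in(0,T)$ the spatial slices $\omega_j(x_0,\cdot)$ are bounded in $V^j\cap V_j^*$, so Proposition~\ref{pr:compact-r3} makes each slice precompact in $W^j$. One would like to assemble these slice-wise convergences, together with a dominated-convergence argument in the time variable, into convergence of the full space-time norm on $L^2M^k$.

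I expect this assembling step to be the genuine obstacle, and in its present form it is not routine. Fibre-wise spatial compactness alone does \emph{not} yield compactness of the embedding $L^2((0,T);V^j\cap V_j^*)\hookrightarrow L^2((0,T);W^j)$, because the operators $\eD^k$ and $\eD_k^*$ defined in \eqref{eq:defDD} contain only the spatial exterior derivatives $\ed$, so the graph norm controls no regularity in the time variable $x_0$. Indeed, a sequence that merely oscillates in time with a fixed spatial profile, such as $\sin(n\pi x_0/T)\,v$ with $v\in V^j\cap V_j^*$ fixed, stays bounded in $HM^k\cap H^*M^k$ yet converges weakly to $0$ without any subsequence converging strongly in $L^2M^k$. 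This shows that a purely component-wise transfer of Proposition~\ref{pr:compact-r3} cannot close the argument. The way I would repair the proof is to invoke a vector-valued (Aubin--Lions--Simon) compactness theorem, which upgrades the compact spatial embedding to compactness in $L^2((0,T);W^j)$ \emph{provided} one also has a uniform bound on $\partial_{x_0}\omega_j$ in a suitable (possibly negative-order) space. Supplying such a time-derivative bound --- for example by enlarging the domains $HM^k$ and $H^*M^k$ so that they see $\partial_{x_0}$, or by restricting to a subclass of forms on which it is controlled --- is the missing ingredient and, to my mind, the real content of the proposition.
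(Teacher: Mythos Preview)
Your analysis is correct, and in fact it exposes a genuine flaw in the paper rather than merely a gap in your own argument. The paper's entire justification for the proposition is the single sentence preceding it: because $V^{k-1}\cap V_{k-1}^*$ and $V^{k}\cap V_{k}^*$ are compactly included in $W^{k-1}$ and $W^{k}$ respectively (Proposition~\ref{pr:compact-r3}), the four-dimensional statement ``therefore'' follows. In other words, the paper attempts \emph{exactly} the naive component-wise transfer that you set up and then correctly reject.

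Your counterexample $u_n=\sin(n\pi x_0/T)\,v$ with fixed nonzero $v\in V^j\cap V_j^*$ is decisive. Since the operators $\eD^k$ and $\eD_k^*$ defined in \eqref{eq:defDD} act only through the spatial derivatives $\ed^{k-1},\ed^k,\ed_{k-1}^*,\ed_k^*$ and involve no $\partial_{x_0}$, the sequence $(u_n)$ is bounded in $HM^k\cap H^*M^k$, converges weakly to zero in $L^2M^k$, yet has constant nonzero norm there. Hence the inclusion $HM^k\cap H^*M^k\hookrightarrow L^2M^k$ is \emph{not} compact under the definitions given, and the proposition as stated is false. Your proposed remedy --- either building $\partial_{x_0}$ into the domain spaces so that an Aubin--Lions--Simon argument applies, or restricting to a subclass with controlled time regularity --- is the right diagnosis of what the paper is missing. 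The paper does not supply any such ingredient; its one-line argument simply overlooks the lack of compactness in the time direction.
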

Set
\begin{equation}
 \label{eq:BZ-func1}
\mathcal{B}^k=\mathcal{R}(\eD^{k-1}),\quad
\mathcal{Z}^k=\mathcal{N}(\eD^{k}),\quad
\mathcal{B}_k^*=\mathcal{R}(\eD_{k+1}^*),\quad
\mathcal{Z}_k^*=\mathcal{N}(\eD_{k}^*).
\end{equation}
An element $u\in L^2M^k$ is called a harmonic $k$-form, if $\eD^ku=0$ and $\eD_{k}^*u=0$, and the set of all harmonic $k$-forms is denoted by $\mathcal{H}^k$. We have
\[
 \mathcal{H}^k
=\{u=dx_0\wedge \omega_{k-1}+\omega_k \mid \omega_{k-1}\in \mathfrak{H}^{k-1},~
\omega_{k}\in \mathfrak{H}^{k}\}.
\]
Theretofore, in our setting (see \eqref{eq:hh}),
\begin{equation}
\label{eq:hf-r4}
\mathcal{H}^1=\{0\}.
\end{equation}
Moreover, as a consequence of Proposition \ref{pr:h-poin-r3}, we have
the Poincar{\' e} inequality as
  \begin{equation}
\label{eq:po-r4}
    \| u \|_{H{M}^k} \leq C_P \| \eD^ku\|_k\qquad (u \in (\mathfrak{Z}^k )^{\perp} \cap HM^k).
  \end{equation}

\subsection{The periodic steady-state problem \eqref{mainVA}}
\label{subse:main}
At this stage, we introduce the Hodge Laplacian $\eL^k:L^2M^k\to L^2M^k$ as
\begin{subequations}
\label{eq:hl-r4}
\begin{equation}
\label{eq:hl-r4-a}
\eL^k=\eD^{k-1}\eD_k^*+\eD_{k+1}^*\eD^{k}
\end{equation}
with its domain
\begin{equation}
\label{eq:hl-r4-b}
D(\eL^k)=\{u\in HM^k\cap H^*M^k\mid \eD^2u\in H^*M^{k+1},~\eD_k^* u\in HM^{k-1}\}.
\end{equation}
\end{subequations}
The Hodge Laplace problem in a strong form is described as follows:
Given $F\in L^2M^k$, find $u\in D(\eL^k)$ such that
 \begin{equation}
\label{eq:hlp-r4-1}
\eL^ku=F-P_\mathcal{H}F,\quad
u\bot \mathcal{H}^k,
\end{equation}
where $P_\mathcal{H}$ denotes the orthogonal projection form $L^2M^k$ onto $\mathcal{H}^k$.
We skip a primal weak form and state
the Hodge Laplace problem in a mixed weak form:
Given $F\in L^2M^k$, find $\sigma\in HM^{k-1}$,
$u\in HM^k$ and $p\in \mathcal{H}^k$ such that 
\begin{subequations}
\label{eq:hlp-r4-3}
\begin{align}
\dual{\sigma,\tau}_{k-1}-\dual{u,\eD^{k-1}\tau}_k&=0 && (\forall \tau\in HM^{k-1}),\label{eq:hlp-r4-3a} \\
\dual{\eD^{k-1}\sigma ,v}_k+\dual{\eD^ku,\eD^kv}_{k+1}+\dual{p,v}_{k}&=\dual{F,v}_k&& (\forall v \in HM^k),\label{eq:hlp-r4-3b} \\
\dual{u,q}_k&=0 && (\forall q\in \mathcal{H}^k).\label{eq:hlp-r4-3c}
 \end{align}
\end{subequations}
The following proposition is an application of
\cite[Theorems 4.7, 4.8 and 4.9]{Arnold(2018)} as Proposition \ref{pr:h-hl-r3}.
\begin{proposition}
\label{pr:h-hl-r4}
Two formulations \eqref{eq:hlp-r4-1} and \eqref{eq:hlp-r4-3} are equivalent.
There exists a unique solution of the Hodge Laplace problem and that the solution satisfies
\begin{equation}
\label{eq:hlp-r4-5}
\|u\|_k+\|\eD^ku\|_{k+1}+\|\eD^*_{k}u\|_{k-1}+\|\eD^{k-1}\eD_k^*u\|_k +
\|\eD_{k+1}^* \eD^ku\|_k+\|p\|_k\le c\|F\|_k
\end{equation}
with a positive constant $c$ depending only on the constant appearing in Poincare's inequality.
\end{proposition}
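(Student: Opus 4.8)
The plan is to deduce this proposition directly from the abstract Hilbert-complex theory of \cite{Arnold(2018)}, exactly as Proposition \ref{pr:h-hl-r3} was obtained in the three-dimensional setting. The only thing that genuinely must be checked is that the pair of complexes \eqref{eq:comp-M-1}--\eqref{eq:comp-M-2} constitutes a \emph{closed} Hilbert complex in the sense of \cite[Chapter 4]{Arnold(2018)}. Once this structural fact is in place, Theorems 4.7, 4.8 and 4.9 of \cite{Arnold(2018)} apply verbatim and yield, in one stroke, the equivalence of the two formulations, existence and uniqueness of a solution, and the stability estimate \eqref{eq:hlp-r4-5}.

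First I would record that the abstract hypotheses are already available from the construction preceding the statement. The operators $\eD^k$ and $\eD_k^*$ are densely defined and closed, they satisfy $\eD^{k+1}\eD^k=0$ and $\eD_k^*\eD_{k+1}^*=0$, and $\eD_{k+1}^*$ is the adjoint of $\eD^k$; all of these properties were inherited componentwise from $\ed^k$ and $\ed_k^*$ when \eqref{eq:comp-M-1}--\eqref{eq:comp-M-2} were formed. Thus $(L^2M^k,\eD^k)$ is already a Hilbert complex, and what remains for closedness is the spectral/range structure.

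Second, the compactness property furnished by Proposition \ref{pr:compact-r4} --- namely that $HM^k\cap H^*M^k$ is compactly included in $L^2M^k$ --- is precisely the hypothesis under which the abstract theory guarantees the complex to be closed: the ranges $\mathcal{B}^k$ and $\mathcal{B}_k^*$ are closed, the harmonic spaces $\mathcal{H}^k$ are finite dimensional (indeed $\mathcal{H}^1=\{0\}$ by \eqref{eq:hf-r4}), the orthogonal decomposition $L^2M^k=\overline{\mathcal{B}}^k\oplus\mathcal{H}^k\oplus\overline{\mathcal{B}}_k^*$ holds, and the Poincar\'e inequality \eqref{eq:po-r4} is valid with a finite constant $C_P$. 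This is the step where the space-time product structure \eqref{eq:HMk}--\eqref{eq:HMkd} has to be handled with some care, since the splitting $u=dx_0\wedge\omega_{k-1}+\omega_k$ together with the componentwise inner product \eqref{eq:ipMk} must be shown to interact correctly with the abstract notions of range, kernel and adjoint. I expect this verification to be the main (and essentially the only) obstacle, but by Proposition \ref{pr:compact-r4} it reduces entirely to the already-established three-dimensional facts.

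Finally, with the closed Hilbert complex and the Poincar\'e inequality \eqref{eq:po-r4} in hand, I would invoke \cite[Theorems 4.7, 4.8 and 4.9]{Arnold(2018)}. Theorem 4.7 supplies the Hodge decomposition and the equivalence of the strong form \eqref{eq:hlp-r4-1} with the mixed weak form \eqref{eq:hlp-r4-3}, the primal weak form that we skipped serving as the intermediate link. Theorems 4.8 and 4.9 then deliver well-posedness of \eqref{eq:hlp-r4-3} through the abstract inf-sup (Brezzi) conditions, whose constants depend only on $C_P$, together with the a priori bound \eqref{eq:hlp-r4-5}; each of the six terms on the left-hand side is controlled by $\|F\|_k$ exactly as in the proof of Proposition \ref{pr:h-hl-r3}. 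Since the entire argument is merely an instance of the abstract theory, no estimate beyond the Poincar\'e inequality is needed, and the proof is complete.
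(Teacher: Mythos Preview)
Your proposal is correct and follows essentially the same approach as the paper: the paper simply states that the proposition is an application of \cite[Theorems 4.7, 4.8 and 4.9]{Arnold(2018)} in the same way as Proposition~\ref{pr:h-hl-r3}, and you have spelled out the verification of the abstract hypotheses (closed Hilbert complex via Proposition~\ref{pr:compact-r4}, Poincar\'e inequality \eqref{eq:po-r4}) that make those theorems applicable. Your write-up is in fact more detailed than what the paper provides, but the route is identical.
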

As the steady-state problem \eqref{mainVA-s} is equivalent to the $\mathfrak{B}_1^*$-problem of \eqref{eq:hlp-r3-1}, the periodic steady-state problem \eqref{mainVA} is equivalent to the $\mathcal{B}_1^*$ problem of the Hodge Laplace problem \eqref{eq:hlp-r4-1}. We explain this fact more precisely.
Letting $\bar{\rho}\in W^0$ and $\bar{j}\in W^1$, we set $F=dx_0\wedge \bar{\rho}+\bar{j}\in L^2M^1$. Assume that $F\in\mathcal{B}_1^*=\mathcal{R}(\eD_2^*)$. That is, we assume that $\bar{\rho}$ and $\bar{j}$ are expressed as $\bar{\rho}=\ed_1^*\tilde{\rho}$ and
$\bar{j}=\ed_2^*\tilde{j}$ for some
$\tilde{\rho}\in V_1^*$ and
$\tilde{j}\in V_2^*$. By Proposition \ref{pr:h-hl-r4}, there exists a unique $v\in D(\eL^2)$ satisfying $\eL^2 v=\tilde{F}:=dx_0\wedge \tilde{\rho}+\tilde{j}\in HM^2$.
Setting
\begin{subequations}
\begin{equation}
 \label{eq:b1-z}
 u=\eD_2^*v,
\end{equation}
we have
\begin{align}
 \eL^1 u
&=(\eD^0\eD_1^*+\eD_2^*\eD^1)\eD_2^* v \label{eq:b1-a}\\
&=\eD_2^*\eD^1\eD_2^* v  \label{eq:b1-b}\\
&=\eD_2^*(\eD^1\eD_2^*+\eD_3^*\eD^2)v
=\eD_2^*\eL^2v=\eD_2^*\tilde{F}=F .\label{eq:b1-c}
\end{align}
\end{subequations}
By \eqref{eq:b1-z}, \eqref{eq:b1-b} and \eqref{eq:b1-c}, we find that $u\in\mathcal{B}_1^*=\mathcal{R}(\eD_2^*)$ and it solves
\begin{equation}
 \label{eq:b1-w}
 \eD_2^*\eD^1u=F.
\end{equation}
This implies that $u=dx_0\wedge \phi+A$ is a solution of
\begin{subequations}
    \label{mainVA-ss2}
  \begin{gather}
\phi\in V^0, \quad \ed^0\phi\in V_1^*, \quad
    \ed_1^*\ed^0\phi = \bar{\rho} ,\label{mainVAa-ss}\\
A\in V^1,\quad \ed_1A\in V_2^*,\quad \ed_{2}^* \ed^{1} A = \bar{j},\quad \ed_1^*A = 0 \label{mainVAc-ss}
  \end{gather}
\end{subequations}
for any $x_0\in (0,T)$. That is, $(\phi,A)$ is a solution of \eqref{mainVA}.
\section{Finite element approximation}
\label{sec:fem}
In the previous section, we formulate the periodic steady-state problem \eqref{mainVA} as the the Hodge Laplace problem \eqref{eq:b1-w} for the differential $1$-form $u$ in $Q\subset\mathbb{R}^4$. Then, we can apply the abstract theory recalled in \S \ref{sec:FEEC-R3} for the Galerkin approximation. The only thing we leave is to construct concretely a finite dimensional subspace $V_h^k$ of $HM^k$ which satisfy (H1), (H2) and (H3) in \S \ref{sec:FEEC-R3}.
In this section, we assume that $\Omega$ is a $3$-rectangle in $\mathbb{R}^3$ and that we are given a mesh subdivision $\mathcal{T}_h$ of $(0,T)\times\Omega$ composed of $4$-rectangle elements. The size parameter $h$ is defined as the maximum length of each $K\in\mathcal{T}_h$.
\subsection{Approximation by a cubical element}
Let $n\ge 1$ and $r\ge 1$ be integers. We introduce the cubical element (see \cite{abf15})
\begin{equation}
    Q_r^-\Lambda^k(I^n) = \bigoplus_{1\leq \sigma_1 <\sigma_2< \cdots <\sigma_k\le n}\left[\bigotimes_{i=1}^n \mathcal{P}_{r-\delta_{i,\sigma}}(I)\right]
    dx_{\sigma_1}\wedge \cdots \wedge dx_{\sigma_k},
\end{equation}
  where $\mathcal{P}_r(I)$ denotes a set of all polynomial defined in $I=[0,1]$ of degree $\le r$ and
  \begin{equation}
    \delta_{i,\sigma}=
    \begin{cases}
   1& (i\in\{\sigma_1,\cdots \sigma_k\}) ,\\
    0& (\mbox{otherwise}).
    \end{cases}
  \end{equation}
Using this, we set
\begin{multline}
H_h{M}^k(I^n)=\{dx_0\wedge (\bar{\eta}(x_0)\omega^{k-1})+\omega^{k}\mid
\\
\omega^{k-1}\in Q_r^-\Lambda^{k-1}(I^3), \bar{\eta}(x^0) \in P_{r-1}(I),
 \omega^{k}\in Q_r^-\Lambda^{k}(I^3), {\eta}(x^0) \in P_{r}(I)
\}
\label{eq:HhMI}
\end{multline}
with $\omega^{-1}=\omega^4=0$.
Then, $Q_r^-\Lambda^k(K)$ and $H_h{M}^k(K)$ are defined similarly for $K\in \mathcal{T}_h$. Actually, they correspond to the case $n=4$.
\begin{theorem}
\label{th:SCP-r4-1}
The space $H_h{M}^k(K)$ can be identified with $Q_r^-\Lambda^k(K)$ for any $K\in \mathcal{T}_h$.
\end{theorem}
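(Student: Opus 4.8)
The plan is to reduce the statement to the reference element $I^4=I\times I^3$ and then to recognize the claimed equality as the product-domain decomposition of the $Q_r^-$ family, specialized to the single distinguished time direction $x_0$. Since $K$ is a $4$-rectangle, the associated diagonal affine map $F\colon I^4\to K$ acts coordinatewise, respects the product splitting into the time interval and the spatial $3$-box, sends each $dx_i$ to a scalar multiple of $dx_i$, and preserves the per-variable polynomial degrees; hence $F^*$ carries both $Q_r^-\Lambda^k(K)$ and $H_hM^k(K)$ onto their counterparts on $I^4$. It therefore suffices to prove $H_hM^k(I^4)=Q_r^-\Lambda^k(I^4)$, where I index the four coordinates by $\{0,1,2,3\}$ with $0$ the time index.

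The key step is a combinatorial partition of the defining direct sum of $Q_r^-\Lambda^k(I^4)$. Writing $dx_\sigma:=dx_{\sigma_1}\wedge\cdots\wedge dx_{\sigma_k}$ for an increasing multi-index $\sigma=\{\sigma_1,\dots,\sigma_k\}\subset\{0,1,2,3\}$, I would split the summands according to whether $0\in\sigma$. If $0\in\sigma$, write $\sigma=\{0\}\cup\sigma'$ with $\sigma'\subset\{1,2,3\}$ and $|\sigma'|=k-1$; then $dx_\sigma=dx_0\wedge dx_{\sigma'}$, and since $\delta_{0,\sigma}=1$ while $\delta_{i,\sigma}=\delta_{i,\sigma'}$ for $i\ge1$, the coefficient space factors as $\mathcal{P}_{r-1}(I)\otimes\bigl[\bigotimes_{i=1}^3\mathcal{P}_{r-\delta_{i,\sigma'}}(I)\bigr]$, whose spatial factor is exactly the coefficient of $dx_{\sigma'}$ in $Q_r^-\Lambda^{k-1}(I^3)$. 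If $0\notin\sigma$, then $\sigma\subset\{1,2,3\}$ and $\delta_{0,\sigma}=0$, so the coefficient factors as $\mathcal{P}_{r}(I)\otimes\bigl[\bigotimes_{i=1}^3\mathcal{P}_{r-\delta_{i,\sigma}}(I)\bigr]$, whose spatial factor is the coefficient of $dx_\sigma$ in $Q_r^-\Lambda^k(I^3)$.

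Assembling the two classes, summing the ``$0\in\sigma$'' contributions over all admissible $\sigma'$ reproduces $dx_0\wedge\bigl(\mathcal{P}_{r-1}(I)\otimes Q_r^-\Lambda^{k-1}(I^3)\bigr)$, i.e. the forms $dx_0\wedge(\bar\eta(x_0)\omega^{k-1})$ with $\bar\eta\in\mathcal{P}_{r-1}(I)$ and $\omega^{k-1}\in Q_r^-\Lambda^{k-1}(I^3)$; summing the ``$0\notin\sigma$'' contributions reproduces $\mathcal{P}_r(I)\otimes Q_r^-\Lambda^k(I^3)$, i.e. the forms $\eta(x_0)\,\omega^{k}$ with $\eta\in\mathcal{P}_r(I)$ and $\omega^{k}\in Q_r^-\Lambda^k(I^3)$. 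These are precisely the two summands in the definition of $H_hM^k(I^4)$, and since the two index classes are disjoint and exhaust all $\sigma$, their direct sum recovers all of $Q_r^-\Lambda^k(I^4)$. The boundary cases are consistent: for $k=0$ only the ``$0\notin\sigma$'' class survives (matching $\omega^{-1}=0$), and for $k=4$ only the ``$0\in\sigma$'' class survives because $Q_r^-\Lambda^4(I^3)=0$ (matching $\omega^{4}=0$).

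I expect the main obstacle to be organizational rather than analytic: one must keep the distinguished time index $0$ consistently separated from the spatial indices $\{1,2,3\}$, verify that the degree reduction $r\to r-1$ falls on the $x_0$ factor exactly when $dx_0$ is present (so the time degrees match $\bar\eta\in\mathcal{P}_{r-1}$ versus $\eta\in\mathcal{P}_r$), and confirm that the coefficient tensor product splits in accordance with the Kronecker symbols $\delta_{i,\sigma}$. There is no genuine analytic difficulty; the identity is the product rule for the $Q_r^-$ complex of \cite{abf15} applied with one factor the one-dimensional interval element, for which only $\Lambda^0$ and $\Lambda^1$ are nonzero, and the computation above makes this self-contained directly from the definition.
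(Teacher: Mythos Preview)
Your proof is correct and follows essentially the same approach as the paper: a direct verification from the definition of $Q_r^-\Lambda^k$. The paper merely writes out the $k=2$ case explicitly and declares the others similar, whereas your combinatorial partition of the index set by whether $0\in\sigma$ handles all $k$ uniformly; otherwise the two arguments are identical in spirit.
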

\begin{proof}
It is verified by a direct calculation. For example,
$u_h\in H_hM^2(K)$ is expressed as
\begin{multline*}
  u_h = \bar{\eta}(x_0) \bar{\alpha}(x_1) \beta(x_2) \gamma(x_3)
        dx_0\wedge dx_1
        + \bar{\eta}(x_0) \alpha(x_1) \bar{\beta}(x_2) \gamma(x_3)
        dx_0\wedge dx_2 \\
        + \bar{\eta}(x_0) \alpha(x_1) \bar{\beta}(x_2) \bar{\gamma}(x_3)
        dx_0\wedge dx_3
        + \eta(x_0) \alpha(x_1) \bar{\beta}(x_2) \bar{\gamma}(x_3)
        dx_2\wedge dx_3 \\
        + \eta(x_0) \bar{\alpha}(x_1) \beta(x_2) \bar{\gamma}(x_3)
        dx_3\wedge dx_1
        + \eta(x_0) \bar{\alpha}(x_1) \bar{\beta}(x_2) \gamma(x_3)
        dx_1\wedge dx_2,
\end{multline*}
where $\eta(x_0), \alpha(x_1), \beta(x_2), \gamma(x_3) \in \mathcal{P}_r(I)$ and $\bar{\eta}(x_0), \bar{\alpha}(x_1), \bar{\beta}(x_2), \bar{\gamma}(x_3) \in \mathcal{P}_{r-1}(I)$. Therefore, $u_h\in Q_r^-\Lambda^2(K)$. The converse is the same.
\end{proof}
We now introduce a finite element space for $HM^k$ as
\begin{equation}
 \label{eq:HhMk-r4-1}
H_h{M}^k=\{u_h\in HM^k \mid u|_{K}\in H_hM^k(K)\ (K\in\mathcal{T}_h)\}.
\end{equation}
We have
\[
 \mathcal{Z}_h^k=\{v\in H_hM^k\mid \eD^kv=0\}\subset \mathcal{Z}^k,\quad
 \mathcal{B}_h^{k+1}=\{\eD^kv\mid v\in H_hM^k\}\subset \mathcal{B}^{k+1}
\]
and the discrete harmonic form is defined as
$\mathcal{H}_h^{k}=\{v\in \mathcal{Z}_h^{k}
\mid v\bot \mathcal{B}_h^{k}\}$.
In our setting,
\begin{equation}
\label{eq:hh10-r4}
 \mathcal{H}^{1}=\mathcal{H}_h^{1}=\{0\}.
\end{equation}
Then, the finite element approximation of \eqref{eq:hlp-r4-3} reads as follows: Given $F\in L^2M^k$, find $\sigma\in H_hM^{k-1}$,
$u_h\in H_hM^k$ and $p_h\in \mathcal{H}_h^k$ such that 
\begin{subequations}
\label{eq:gal-r4-3}
\begin{align}
\dual{\sigma_h,\tau}_{k-1}-\dual{u_h,\eD^{k-1}\tau}_k&=0 && (\forall \tau\in H_hM^{k-1}),\label{eq:gal-r4-3a} \\
\dual{\eD^{k-1}\sigma_h ,v}_k+\dual{\eD^ku_h,\eD^kv}_{k+1}+\dual{p_h,v}_{k}&=\dual{F,v}_k&& (\forall v \in H_hM^k),\label{eq:gal-r4-3b} \\
\dual{u_h,q}_k&=0 && (\forall q\in \mathcal{H}_h^k).\label{eq:gal-r4-3c}
 \end{align}
\end{subequations}
\begin{theorem}
\label{th:SCP-r4-2}
\begin{enumerate}
\item[(a)]
The space $H_h{M}^k$ has the approximation property;
\begin{equation*}
\lim_{h\to 0}\inf_{v\in H_hM^k}\|w-v\|_{HM^k}=0\qquad (w\in HM^k).
\end{equation*}
\item[(b)] The space $H_h{M}^k$ has the subcomplex property;
$\eD^{k-1}H_hM^{k-1} \subset H_hM^{k}$ and
$\eD^kH_hM^k \subset H_hM^{k+1}$.
\item[(c)]
There exists a bounded cohain projection $\Pi_h^k:HM^k \rightarrow H_hM^{k}$; The following diagram commutes:
\[
\xymatrix{
  0 \ar[r] &
   H{M}^0 \ar[r]^{\eD^0} \ar[d]^{\Pi_h^0} &
   H{M} \ar[r]^{\eD^1} \ar[d]^{\Pi_h^1}&
   H{M} \ar[r]^{\eD^2} \ar[d]^{\Pi_h^2}&
   H{M} \ar[r]^{\eD^3} \ar[d]^{\Pi_h^3}&
   H{M} \ar[r]^{\eD^4} \ar[d]^{\Pi_h^4}&
  0\\
  0 \ar[r] &
   H_h{M}^0 \ar[r]^{\eD^0}&
   H_h{M}^1 \ar[r]^{\eD^1}&
   H_h{M}^2 \ar[r]^{\eD^2}&
   H_h{M}^3 \ar[r]^{\eD^3}&
   H_h{M}^4 \ar[r]^{\eD^4}&
  0.
}
\]
\end{enumerate}
\end{theorem}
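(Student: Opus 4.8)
The plan is to exploit the tensor-product (cubical) structure throughout, reducing every assertion to the three-dimensional theory already recalled in \S\ref{sec:FEEC-R3} together with one-dimensional polynomial approximation in the time variable $x_0$. The decisive structural fact, used repeatedly, is that $\eD^k$ differentiates only in the spatial variables: for $u=dx_0\wedge\omega_{k-1}+\omega_k$ one has $\eD^k u=dx_0\wedge(\ed^{k-1}\omega_{k-1})+\ed^k\omega_k$, so the time direction is inert under $\eD^k$ and the $dx_0$-part never mixes with the non-$dx_0$-part. I would therefore view $(HM^\bullet,\eD^\bullet)$ as two copies of the three-dimensional de Rham complex carried along the time axis, with time-degree $r$ attached to the non-$dx_0$ components and degree $r-1$ to the $dx_0$ components, exactly as in \eqref{eq:HhMI}. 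For the subcomplex property (b) I would argue componentwise: a local element of $H_hM^k(K)$ is a sum of terms $\bar{\eta}(x_0)\,dx_0\wedge\omega^{k-1}$ with $\bar{\eta}\in\mathcal{P}_{r-1}(I)$, $\omega^{k-1}\in Q_r^-\Lambda^{k-1}(I^3)$, and $\eta(x_0)\,\omega^k$ with $\eta\in\mathcal{P}_r(I)$, $\omega^k\in Q_r^-\Lambda^k(I^3)$. Applying $\eD^k$ yields $\bar{\eta}\,dx_0\wedge(\ed^{k-1}\omega^{k-1})$ and $\eta\,(\ed^k\omega^k)$; since $\ed^{k-1}\omega^{k-1}\in Q_r^-\Lambda^{k}(I^3)$ and $\ed^k\omega^k\in Q_r^-\Lambda^{k+1}(I^3)$ by the three-dimensional subcomplex property of $Q_r^-\Lambda$ (see \cite{abf15}) and the time-degrees are unchanged, both lie in $H_hM^{k+1}(K)$. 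Theorem \ref{th:SCP-r4-1} and global assembly then give $\eD^kH_hM^k\subset H_hM^{k+1}$, and likewise for $\eD^{k-1}$.

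The cochain projection (c) is the heart of the theorem. Let $\pi^{(3),j}_h$ denote the bounded commuting cochain projections of the three-dimensional complex \eqref{eq:dc-r3-1} onto the $Q_r^-\Lambda^j$ spaces, whose existence is recalled in \S\ref{sec:FEEC-R3} (from \cite{afw06,afw10,abf15}), and let $\rho^{r}_h,\rho^{r-1}_h$ be bounded projections in the time variable onto the piecewise-$\mathcal{P}_r$ and piecewise-$\mathcal{P}_{r-1}$ spaces on the time mesh. I would define $\Pi_h^k$ on $u=dx_0\wedge\omega_{k-1}+\omega_k$ by the tensor-product rule
\[
\Pi_h^k u=dx_0\wedge\bigl((\rho^{r-1}_h\otimes\pi^{(3),k-1}_h)\,\omega_{k-1}\bigr)+(\rho^{r}_h\otimes\pi^{(3),k}_h)\,\omega_k .
\]
By Theorem \ref{th:SCP-r4-1} this maps $HM^k$ into $H_hM^k$, respects the staggered time-degrees, and restricts to the identity on $H_hM^k$. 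Boundedness in $\|\cdot\|_{HM^k}$ follows from boundedness of each tensor factor, once one observes that $\|\cdot\|_{HM^k}$ is componentwise the $L^2$-in-time norm of the three-dimensional graph norms of $\omega_{k-1}$ and $\omega_k$. For the commuting relation, $\eD^k$ commutes with $\rho^{r}_h$ and $\rho^{r-1}_h$ because it carries no time derivative, while $\ed^{j}\pi^{(3),j}_h=\pi^{(3),j+1}_h\ed^{j}$ holds by the three-dimensional theory; inserting these into $\eD^k\Pi_h^k u$ and into $\Pi_h^{k+1}\eD^k u$ shows the two expressions coincide index-by-index, i.e. $\eD^k\Pi_h^k=\Pi_h^{k+1}\eD^k$. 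Notably, no commutation of the time projections with $d/dt$ is ever required, which is precisely where the absence of a time-derivative term in \eqref{mainVA} enters.

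The approximation property (a) I would prove independently, again by the tensor-product estimate. Writing $w=dx_0\wedge\omega_{k-1}+\omega_k$ with $\omega_{k-1}\in V^{k-1}$, $\omega_k\in V^k$, I would approximate each component first in the time variable by piecewise polynomials of degree $r-1$ and $r$, and then in space by $Q_r^-\Lambda$, invoking the three-dimensional approximation property \textup{(H3)} recalled in \S\ref{sec:FEEC-R3}. Since $\|\cdot\|_{HM^k}^2$ splits as the sum of the $L^2(0,T)$-norms of the three-dimensional graph norms of the two components, density of smooth forms together with the one- and three-dimensional estimates yields $\inf_{v\in H_hM^k}\|w-v\|_{HM^k}\to0$ as $h\to0$.

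The step I expect to be the main obstacle is the bookkeeping in (c): verifying that the tensor-product projection genuinely lands in the \emph{globally conforming} space $H_hM^k$ with the correct staggered degrees ($r-1$ on $dx_0$-parts, $r$ on the rest) and that the commuting diagram closes at every index. Once the spatial-only nature of $\eD^k$ is used to decouple the time axis, each of these collapses to the corresponding three-dimensional statement, so no genuinely new analytic estimate beyond those of \S\ref{sec:FEEC-R3} and \cite{abf15} is needed.
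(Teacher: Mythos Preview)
Your argument is correct and follows the paper's componentwise strategy: split $u=dx_0\wedge\omega_{k-1}+\omega_k$, use that $\eD^k$ carries no time derivative, and reduce each of (a)--(c) to the known three-dimensional properties of $Q_r^-\Lambda^\bullet$. The one notable difference is that the paper's proof of (c) writes simply $\Pi_h u=dx_0\wedge\pi_h^{k-1}\omega^{k-1}+\pi_h^k\omega^k$ using only the three-dimensional spatial projections $\pi_h^j$, whereas you tensor these with one-dimensional time projections $\rho_h^{r-1},\rho_h^r$; your version is in fact what is needed for the image to lie in the piecewise-polynomial-in-time space $H_hM^k$ of \eqref{eq:HhMk-r4-1}, so your extra bookkeeping fills a small omission in the paper's presentation rather than departing from its approach.
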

\begin{proof}
(a) is a standard fact. (b) follows from a direct calculation.
(c) is a consequence of the result for $n=3$ as verified below.
Let $\mathcal{T}'_h$ be a subdivision of $\Omega$ by $3$-rectangles such that $\mathcal{T}_h|_{x_0=0}=\mathcal{T}_h'$, we set
\[
V_h^k
=\{v_h\in V^k \mid v|_{K'}\in H_hM^k(K')\ (K'\in\mathcal{T}_h')\}.
\]
According to the explanation in for $\mathcal{P}_r^-$ \cite[p.92]{Arnold(2018)}.
The paper \cite{abf15} shows the existence of bounded cochain projection following the method of \cite{smoothing}.
The case of $Q_r^-\Lambda^k$ is also shown to have the bounded cochain projection by exactly same procedure.
Therefore, there exists a bounded cochain projection  where
 $\pi_h:V^k \to V_h^k$ and the cubical element satisfies
the commutativity property: $\pi_h^{k+1}\ed^k = \ed^{k}\pi_h^k$. We set
\[
 \Pi_hu=dx_0\wedge \pi_h^{k-1}\omega^{k-1}+\pi_h^{k}\omega^k\quad
(u=dx_0\wedge\omega^{k-1}+\omega^k\in HM^k).
\]
Then, we have $\Pi_h^{k+1}\eD^k = \eD^k\Pi_h^k$ by a direct calculation.
\end{proof}
Therefore, we obtain (see \cite[Theorems 5.4 and 5.5]{Arnold(2018)})
\begin{theorem}
\label{pr:gal-r4}
The finite element scheme \eqref{eq:gal-r4-3} is stable and convergent in the sense of Proposition \ref{pr:gal-r3}.
\end{theorem}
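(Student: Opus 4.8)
The plan is to recognize Theorem~\ref{pr:gal-r4} as a direct transcription of the abstract FEEC stability-and-convergence theory (\cite[Theorems~5.4 and~5.5]{Arnold(2018)}, recalled here as Proposition~\ref{pr:gal-r3}) to the space-time complex \eqref{eq:comp-M-1}, once the hypotheses that that theory requires have been verified in the four-dimensional setting. Those hypotheses split into two groups: properties of the continuous complex (closedness, the compactness property, and the resulting Poincar{\'e} inequality and finite-dimensionality of harmonic forms), and the three structural conditions \textbf{(H1)}--\textbf{(H3)} on the discrete family $\{H_hM^k\}$.

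First I would record that the continuous side is already in place. The pair \eqref{eq:comp-M-1}--\eqref{eq:comp-M-2} is a Hilbert complex of densely defined closed operators satisfying $\eD^{k+1}\eD^k=0$, and Proposition~\ref{pr:compact-r4} supplies the compactness property, namely that $HM^k\cap H^*M^k$ is compactly included in $L^2M^k$. This is exactly the abstract input used to obtain well-posedness of the mixed problem \eqref{eq:hlp-r4-3} in Proposition~\ref{pr:h-hl-r4}, together with the Poincar{\'e} inequality \eqref{eq:po-r4}; in particular the space of harmonic $1$-forms is finite-dimensional and, by \eqref{eq:hf-r4}, equals $\{0\}$. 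Thus every ingredient the abstract a~priori theory needs on the continuous level is available.

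Next I would verify \textbf{(H1)}--\textbf{(H3)} for $\{H_hM^k\}$ by simply quoting Theorem~\ref{th:SCP-r4-2}: part~(b) is the subcomplex property \textbf{(H1)}, part~(c) is the existence of a uniformly bounded cochain projection $\Pi_h^k$ satisfying $\Pi_h^{k+1}\eD^k=\eD^k\Pi_h^k$, i.e.\ \textbf{(H2)}, and part~(a) is the approximation property \textbf{(H3)}. Moreover \eqref{eq:hh10-r4} gives $\mathcal{H}_h^1=\mathcal{H}^1=\{0\}$, so the discrete and continuous harmonic spaces coincide and the ``gap'' contribution that normally appears in the abstract error estimate vanishes identically; this removes the only place where the discrete harmonic forms could have caused trouble.

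With both groups of hypotheses secured, the conclusion follows by invoking Proposition~\ref{pr:gal-r3} verbatim: the discrete mixed problem \eqref{eq:gal-r4-3} has a unique solution, it satisfies the $h$-uniform stability bound $\|\sigma_h\|_{k-1}+\|u_h\|_k+\|p_h\|_k\le c\|F\|_k$, and the Galerkin solution converges, $\|\sigma-\sigma_h\|_{HM^{k-1}}+\|u-u_h\|_{HM^k}+\|p-p_h\|_k\to 0$ as $h\to 0$. The genuinely nontrivial step is \textbf{(H2)}, the construction of a bounded cochain projection on $\mathbb{R}^4$; everything else is bookkeeping. That step is already discharged in Theorem~\ref{th:SCP-r4-2}(c) by lifting the known three-dimensional projection $\pi_h^k$ through the space-time splitting $\Pi_h u=dx_0\wedge\pi_h^{k-1}\omega^{k-1}+\pi_h^k\omega^k$, whose uniform boundedness and commutativity with $\eD^k$ follow from the corresponding properties of $\pi_h^k$ and the definition \eqref{eq:defDa} of $\eD^k$. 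Hence the only care I would take in writing the final proof is to check that the abstract hypotheses are matched in the precise form Proposition~\ref{pr:gal-r3} demands.
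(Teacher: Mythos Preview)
Your proposal is correct and matches the paper's own argument exactly: the paper gives no separate proof beyond the sentence ``Therefore, we obtain (see \cite[Theorems 5.4 and 5.5]{Arnold(2018)})'', i.e.\ it simply invokes the abstract FEEC stability and convergence theory once Theorem~\ref{th:SCP-r4-2} has verified \textbf{(H1)}--\textbf{(H3)} and the continuous-level ingredients (Proposition~\ref{pr:compact-r4}, \eqref{eq:po-r4}, \eqref{eq:hf-r4}, \eqref{eq:hh10-r4}) are in place. Your write-up is in fact more detailed than the paper's, but the logical route is identical.
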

\subsection{Reference element}
As a concrete example, consider a hypercube with a node element and an edge element as the reference elements in the 4d space-time (see Figure \ref{4delement}).
A cube is placed at time $T_0$.
This cube is extruded to $T_1$ along with time direction to make a hypercube.
The number of node points is 8+8=16.
The cube in the time $T_0$ includes 12 edges, and the cube in the time $T_1$ also consists of 12 edges. Besides, extruded eight nodes make eight edges along with time direction, so the total number of the edge is 32.
Hypersurfaces and hypervolumes are also considered in the same way, with 20,1.
We consider the differential forms
,0-form $\sigma_h \in Q_1^-\Lambda^0(\bar{K})=Q_1(\bar{K})$ and a 1-form $u_h \in Q_1^-\Lambda^1(\bar{K})$, on the reference element
$
\bar{K}= \{(t,x,y,z) ;-\Delta T/2  \leq t \leq \Delta T/2,
-\Delta X/2  \leq x \leq \Delta X/2,
-\Delta Y/2  \leq y \leq \Delta Y/2,
-\Delta Z/2  \leq z \leq \Delta Z/2\}
$.
A hyper node reference element have values on the 16 grid points.
$\sigma_h$ is considered as following
\begin{gather}
\sigma_h = \sum_{i=1}^{16}\sigma^i P_i(x,y,z,t)\\
P_{1} = \frac{1}{16}(1-\frac{2x}{\Delta X})(1-\frac{2y}{\Delta Y})(1-\frac{2z}{\Delta Z})(1-\frac{2t}{\Delta T}),\notag\\
P_{2} = \frac{1}{16}(1+\frac{2x}{\Delta X})(1-\frac{2y}{\Delta Y})(1-\frac{2z}{\Delta Z})(1-\frac{2t}{\Delta T}),\notag\\
P_{3} = \frac{1}{16}(1-\frac{2x}{\Delta X})(1+\frac{2y}{\Delta Y})(1-\frac{2z}{\Delta Z})(1-\frac{2t}{\Delta T}),\notag\\
P_{4} = \frac{1}{16}(1+\frac{2x}{\Delta X})(1+\frac{2y}{\Delta Y})(1-\frac{2z}{\Delta Z})(1-\frac{2t}{\Delta T}),\notag\\
P_{5} = \frac{1}{16}(1-\frac{2x}{\Delta X})(1-\frac{2y}{\Delta Y})(1+\frac{2z}{\Delta Z})(1-\frac{2t}{\Delta T}),\notag\\
P_{6} = \frac{1}{16}(1+\frac{2x}{\Delta X})(1-\frac{2y}{\Delta Y})(1+\frac{2z}{\Delta Z})(1-\frac{2t}{\Delta T}),\notag\\
P_{7} = \frac{1}{16}(1-\frac{2x}{\Delta X})(1+\frac{2y}{\Delta Y})(1+\frac{2z}{\Delta Z})(1-\frac{2t}{\Delta T}),\notag\\
P_{8} = \frac{1}{16}(1+\frac{2x}{\Delta X})(1+\frac{2y}{\Delta Y})(1+\frac{2z}{\Delta Z})(1-\frac{2t}{\Delta T}),\notag\\
P_{9} = \frac{1}{16}(1-\frac{2x}{\Delta X})(1-\frac{2y}{\Delta Y})(1-\frac{2z}{\Delta Z})(1+\frac{2t}{\Delta T}),\notag\\
P_{10} = \frac{1}{16}(1+\frac{2x}{\Delta X})(1-\frac{2y}{\Delta Y})(1-\frac{2z}{\Delta Z})(1+\frac{2t}{\Delta T}),\notag\\
P_{11} = \frac{1}{16}(1-\frac{2x}{\Delta X})(1+\frac{2y}{\Delta Y})(1-\frac{2z}{\Delta Z})(1+\frac{2t}{\Delta T}),\notag\\
P_{12} = \frac{1}{16}(1+\frac{2x}{\Delta X})(1+\frac{2y}{\Delta Y})(1-\frac{2z}{\Delta Z})(1+\frac{2t}{\Delta T}),\notag\\
P_{13} = \frac{1}{16}(1-\frac{2x}{\Delta X})(1-\frac{2y}{\Delta Y})(1+\frac{2z}{\Delta Z})(1+\frac{2t}{\Delta T}),\notag\\
P_{14} = \frac{1}{16}(1+\frac{2x}{\Delta X})(1-\frac{2y}{\Delta Y})(1+\frac{2z}{\Delta Z})(1+\frac{2t}{\Delta T}),\notag\\
P_{15} = \frac{1}{16}(1-\frac{2x}{\Delta X})(1+\frac{2y}{\Delta Y})(1+\frac{2z}{\Delta Z})(1+\frac{2t}{\Delta T}),\notag\\
P_{16} = \frac{1}{16}(1+\frac{2x}{\Delta X})(1+\frac{2y}{\Delta Y})(1+\frac{2z}{\Delta Z})(1+\frac{2t}{\Delta T})
\end{gather}
\begin{figure}[htbp]
 \centering
 \includegraphics[width=1.0\textwidth]{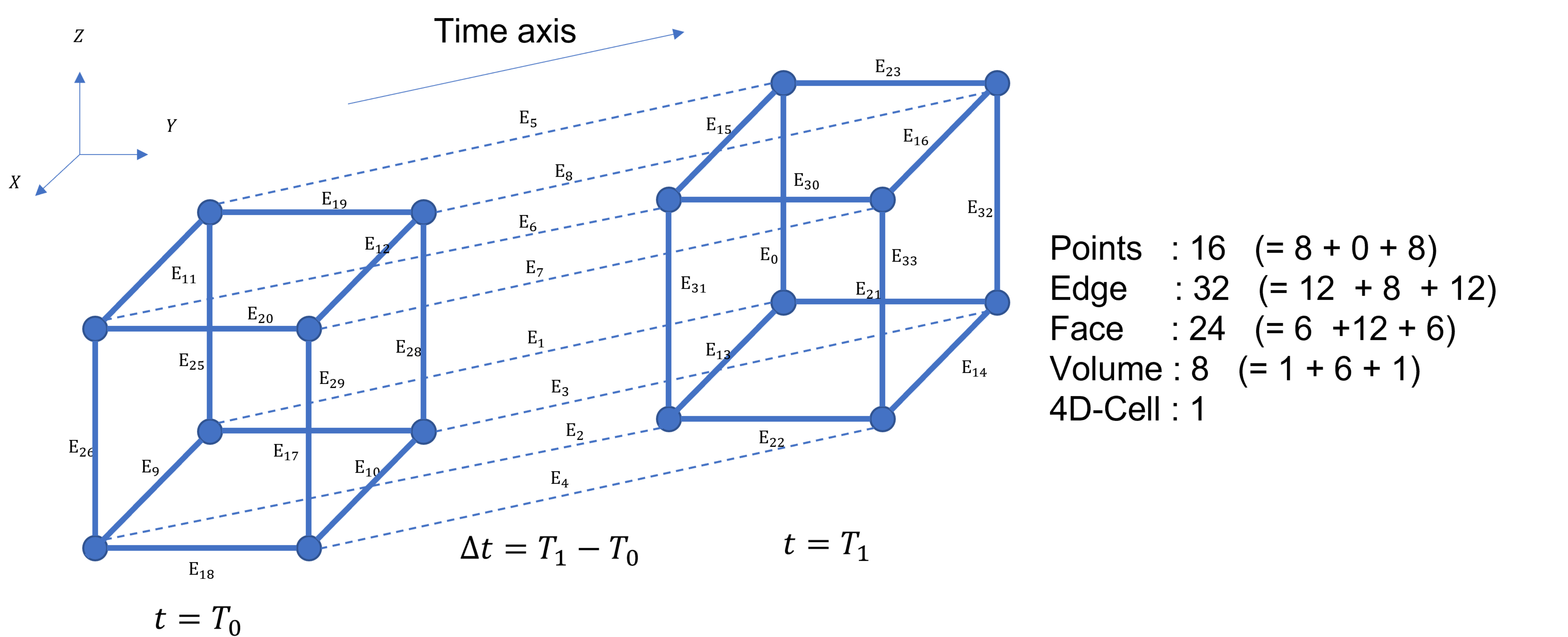}
\caption{hyper edge element}
  \label{4delement}
\end{figure}
A hyper edge reference element have values on the 32 edges.
$u_h$ is considered as following
\begin{equation}
  u_h = \phi dt + A_1 dx + A_2 dy + A_3 dz
\end{equation}
which,
\begin{gather}
  \phi = \sum_{i=1}^8 \phi^i E_i(x,y,z,t)\\
  E_1 = \frac{1}{8}(1-\frac{2x}{\Delta X})(1-\frac{2y}{\Delta Y})(1-\frac{2z}{\Delta Z}),
  E_2 = \frac{1}{8}(1+\frac{2x}{\Delta X})(1-\frac{2y}{\Delta Y})(1-\frac{2z}{\Delta Z}),\notag\\
  E_3 = \frac{1}{8}(1-\frac{2x}{\Delta X})(1+\frac{2y}{\Delta Y})(1-\frac{2z}{\Delta Z}),
  E_4 = \frac{1}{8}(1+\frac{2x}{\Delta X})(1+\frac{2y}{\Delta Y})(1-\frac{2z}{\Delta Z}),\notag\\
  E_5 = \frac{1}{8}(1-\frac{2x}{\Delta X})(1-\frac{2y}{\Delta Y})(1+\frac{2z}{\Delta Z}),
  E_6 = \frac{1}{8}(1+\frac{2x}{\Delta X})(1-\frac{2y}{\Delta Y})(1+\frac{2z}{\Delta Z}),\notag\\
  E_7 = \frac{1}{8}(1-\frac{2x}{\Delta X})(1+\frac{2y}{\Delta Y})(1+\frac{2z}{\Delta Z}),
  E_8 = \frac{1}{8}(1+\frac{2x}{\Delta X})(1+\frac{2y}{\Delta Y})(1+\frac{2z}{\Delta Z})
\end{gather}
\begin{gather}
  A_1 = \sum_{i=9}^{16} A^i E_i(x,y,z,t)  \\
  E_9 = \frac{1}{8}(1-\frac{2y}{\Delta Y})(1-\frac{2z}{\Delta Z})(1-\frac{2t}{\Delta T}),
  E_{10} = \frac{1}{8}(1+\frac{2y}{\Delta Y})(1-\frac{2z}{\Delta Z})(1-\frac{2t}{\Delta T}),\notag\\
  E_{11} = \frac{1}{8}(1-\frac{2y}{\Delta Y})(1+\frac{2z}{\Delta Z})(1-\frac{2t}{\Delta T}),
  E_{12} = \frac{1}{8}(1+\frac{2y}{\Delta Y})(1+\frac{2z}{\Delta Z})(1-\frac{2t}{\Delta T}),\notag\\
  E_{13} = \frac{1}{8}(1-\frac{2y}{\Delta Y})(1-\frac{2z}{\Delta Z})(1+\frac{2t}{\Delta T}),
  E_{14} = \frac{1}{8}(1+\frac{2y}{\Delta Y})(1-\frac{2z}{\Delta Z})(1+\frac{2t}{\Delta T}),\notag\\
  E_{15} = \frac{1}{8}(1-\frac{2y}{\Delta Y})(1+\frac{2z}{\Delta Z})(1+\frac{2t}{\Delta T}),
  E_{16} = \frac{1}{8}(1+\frac{2y}{\Delta Y})(1+\frac{2z}{\Delta Z})(1+\frac{2t}{\Delta T})
  \end{gather}
  \begin{gather}
  A_2 = \sum_{i=17}^{24} A^i E_i(x,y,z,t)\\
  E_{17} = \frac{1}{8}(1-\frac{2z}{\Delta Z})(1-\frac{2x}{\Delta X})(1-\frac{2t}{\Delta T}),
  E_{18} = \frac{1}{8}(1+\frac{2z}{\Delta Z})(1-\frac{2x}{\Delta X})(1-\frac{2t}{\Delta T}),\notag\\
  E_{19} = \frac{1}{8}(1-\frac{2z}{\Delta Z})(1+\frac{2x}{\Delta X})(1-\frac{2t}{\Delta T}),
  E_{20} = \frac{1}{8}(1+\frac{2z}{\Delta Z})(1+\frac{2x}{\Delta X})(1-\frac{2t}{\Delta T}),\notag\\
  E_{21} = \frac{1}{8}(1-\frac{2z}{\Delta Z})(1-\frac{2x}{\Delta X})(1+\frac{2t}{\Delta T}),
  E_{22} = \frac{1}{8}(1+\frac{2z}{\Delta Z})(1-\frac{2x}{\Delta X})(1+\frac{2t}{\Delta T}),\notag\\
  E_{23} = \frac{1}{8}(1-\frac{2z}{\Delta Z})(1+\frac{2x}{\Delta X})(1+\frac{2t}{\Delta T}),
  E_{24} = \frac{1}{8}(1+\frac{2z}{\Delta Z})(1+\frac{2x}{\Delta X})(1+\frac{2t}{\Delta T})
  \end{gather}
  \begin{gather}
  A_3 = \sum_{i=25}^{32} A^i E_i(x,y,z,t)  \\
  E_{25} = \frac{1}{8}(1-\frac{2x}{\Delta X})(1-\frac{2y}{\Delta Y})(1-\frac{2t}{\Delta T}),
  E_{26} = \frac{1}{8}(1+\frac{2x}{\Delta X})(1-\frac{2y}{\Delta Y})(1-\frac{2t}{\Delta T}),\notag\\
  E_{27} = \frac{1}{8}(1-\frac{2x}{\Delta X})(1+\frac{2y}{\Delta Y})(1-\frac{2t}{\Delta T}),
  E_{28} = \frac{1}{8}(1+\frac{2x}{\Delta X})(1+\frac{2y}{\Delta Y})(1-\frac{2t}{\Delta T}),\notag\\
  E_{29} = \frac{1}{8}(1-\frac{2x}{\Delta X})(1-\frac{2y}{\Delta Y})(1+\frac{2t}{\Delta T}),
  E_{30} = \frac{1}{8}(1+\frac{2x}{\Delta X})(1-\frac{2y}{\Delta Y})(1+\frac{2t}{\Delta T}),\notag\\
  E_{31} = \frac{1}{8}(1-\frac{2x}{\Delta X})(1+\frac{2y}{\Delta Y})(1+\frac{2t}{\Delta T}),
  E_{32} = \frac{1}{8}(1+\frac{2x}{\Delta X})(1+\frac{2y}{\Delta Y})(1+\frac{2t}{\Delta T})
\end{gather},respectively.
The variable of scalar potential is placed on an edge in the time direction and the variable does not contain time.In higher-order elements, time is included at (r-1)-order.
The vector potential has a time component, but the differential operator $d^k$  not contribute for time.
\subsection{Arrow--Hurwicz Algorithm (AHA)}
\label{sec:iterative}
As is stated in \S \ref{subse:main}, the periodic steady-state problem \eqref{mainVA} is equivalent to the $\mathcal{B}_1^*$ problem of the Hodge Laplace problem \eqref{eq:hlp-r4-1};
\begin{equation}
 \label{eq:b1-w10}
u=dx_0\wedge \phi+A\in \mathcal{B}_1^*,\quad
 \eD_2^*\eD^1u=F,
\end{equation}
where $F=dx_0\wedge \bar{\rho}+\bar{j}\in\mathcal{B}_1^*=\mathcal{R}(\eD_2^*)$. 
We recall that $\mathcal{B}_1^*$ is defined as $ \mathcal{B}_1^*=\mathcal{R}(\eD_2^*)\subset\mathcal{N}(\eD_1^*)$.
If $F\in \mathcal{B}_1^*$ is given exactly (in a suitable discrete sense), we can obtain the solution $u$ by solving the full Hodge Laplace problem \eqref{eq:hlp-r4-1}. However, it is difficult to ensure $F\in \mathcal{B}_1^*$ in the actual computation. Therefore, it is useful to formulate \eqref{eq:b1-w10} as a saddle point problem of the action $\mathcal{S}$ defined as
\begin{equation}
 \label{eq:aha01}
\mathcal{S}(v,\tau) = \frac{1}{2}\dual{\eD^1 v, \eD^1 v}_{2} - \dual{v,\eD^0\tau}_{1} - \dual{F,v}_1\quad
(v\in HM^1,\tau\in HM^0).
\end{equation}
Recall that $(u,\sigma)\in HM^1\times HM^0$ is called a saddle point of $\mathcal{S}$ if it satisfies
\begin{equation}
 \label{eq:aha05}
 \mathcal{S}(u, \tau) \leq \mathcal{S}(u, \sigma) \leq \mathcal{S}(v,\sigma)
\qquad (\forall (v,\tau)\in HM^1\times HM^0).
\end{equation}
To be more specific, we state the following theorem.
\begin{theorem}
A couple of differential forms $(u,\sigma)\in HM^1\times HM^0$ is a  a saddle point of $\mathcal{S}$ if and only if it solves the mixed weak form of \eqref{eq:b1-w10} given as
\begin{subequations} 
\label{eq:aha02}
 \begin{align}
      \dual{\eD^1 u, \eD^1 v}_{2}  +  \dual{\eD^0\sigma,v}_{1} &= \dual{F,v}_1 && (\forall v\in H{M}^1), \label{eq:aha02a} \\
     \dual{u, \eD^0\tau}_1 &= 0   &&(\forall \tau \in H{M}^0). \label{eq:aha02b}
  \end{align}
\end{subequations}
\end{theorem}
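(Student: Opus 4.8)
The plan is to exploit the convex--concave structure of $\mathcal{S}$. For a fixed $\tau$, the map $v\mapsto \mathcal{S}(v,\tau)$ is a quadratic functional whose leading term $\tfrac12\dual{\eD^1 v,\eD^1 v}_2=\tfrac12\|\eD^1 v\|_2^2$ is nonnegative, hence convex; for a fixed $v$, the map $\tau\mapsto \mathcal{S}(u,\tau)$ is affine. Consequently, a couple $(u,\sigma)$ satisfies the saddle-point inequalities \eqref{eq:aha05} if and only if $u$ minimizes $\mathcal{S}(\cdot,\sigma)$ over $HM^1$ and $\sigma$ maximizes $\mathcal{S}(u,\cdot)$ over $HM^0$. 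Because $\mathcal{S}$ is smooth in each slot (quadratic and affine, respectively), I will characterize each of these two extremal properties by a first-order condition and show that the pair of conditions is exactly \eqref{eq:aha02}.

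For the minimization in $v$, I will use the exact quadratic expansion: writing $v=u+w$ with $w\in HM^1$,
\begin{equation*}
\mathcal{S}(u+w,\sigma)=\mathcal{S}(u,\sigma)+\Big(\dual{\eD^1 u,\eD^1 w}_2-\dual{w,\eD^0\sigma}_1-\dual{F,w}_1\Big)+\tfrac12\|\eD^1 w\|_2^2.
\end{equation*}
The remainder $\tfrac12\|\eD^1 w\|_2^2$ is always nonnegative, so $\mathcal{S}(u,\sigma)\le \mathcal{S}(v,\sigma)$ for all $v\in HM^1$ if and only if the linear term in parentheses vanishes for every $w\in HM^1$. Using the symmetry of the inner product $\dual{\cdot,\cdot}_1$, this vanishing condition is precisely \eqref{eq:aha02a}. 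This single identity delivers both implications for the right-hand inequality in \eqref{eq:aha05} at once, so no separate converse argument is needed here; it is the positive-definiteness of the inner product on the $M^k$ spaces (established earlier in the paper) that guarantees the remainder is genuinely nonnegative, and hence that stationarity actually certifies a global minimum.

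For the maximization in $\tau$, I will write $\tau\mapsto \mathcal{S}(u,\tau)=C-\dual{u,\eD^0\tau}_1$, where $C$ does not depend on $\tau$. Since $HM^0$ is a linear space and $\tau\mapsto \dual{u,\eD^0\tau}_1$ is a bounded linear functional on it, this affine map attains a maximum if and only if that linear functional vanishes identically; in that case every point, $\sigma$ in particular, is a maximizer and $\mathcal{S}(u,\tau)\le\mathcal{S}(u,\sigma)$ holds (with equality) for all $\tau$. Hence the left-hand inequality in \eqref{eq:aha05} is valid for all $\tau\in HM^0$ if and only if $\dual{u,\eD^0\tau}_1=0$ for all $\tau\in HM^0$, which is \eqref{eq:aha02b}.

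Combining the two characterizations yields the stated equivalence in both directions. The only step requiring genuine care, and the natural candidate for the main obstacle, is the maximization in $\tau$: because the $\tau$-dependence is merely affine rather than coercive, one cannot simply differentiate and invoke convex coercivity, but must instead argue that the mere \emph{existence} of a maximizer forces the linear part to vanish, since otherwise $\pm\tau$ would drive $\mathcal{S}(u,\cdot)$ arbitrarily large. The remaining pieces---the quadratic expansion and the identification of the resulting stationarity conditions with \eqref{eq:aha02a} and \eqref{eq:aha02b}---are routine once the positivity of $\dual{\cdot,\cdot}_k$ is invoked.
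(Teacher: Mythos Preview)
Your argument is correct and essentially identical to the paper's: both isolate the first-order variation of $\mathcal{S}$ in each slot and use the nonnegativity of the quadratic remainder (the paper via an $\epsilon$-perturbation and $\epsilon\to 0^{\pm}$, you via the explicit expansion in $w$ and the affine-in-$\tau$ observation). The paper treats only the forward direction in detail and states that the converse is similar, whereas you dispatch both directions at once by invoking convexity of $v\mapsto\mathcal{S}(v,\sigma)$ and affineness of $\tau\mapsto\mathcal{S}(u,\tau)$.
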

\begin{proof}
Let $(u,\sigma)\in HM^1\times HM^0$ satisfy \eqref{eq:aha05}.
For any $\epsilon\in\mathbb{R}$ and $(v,\tau)\in HM^1\times HM^0$, we have
\[
    0 \leq \mathcal{S}(u + \epsilon v,\sigma) - \mathcal{S}(u,\sigma) = \frac{\epsilon^2}{2}\dual{\eD^1 v, \eD^1 v}_{2}
 + \epsilon [\dual{\eD^1 u, \eD^1 v}_{2} + \dual{v,\eD^0\tau}_{1} - \dual{F,v}_1].
\]
Therefore, letting $\epsilon \downarrow 0$, we obtain
\[
    \dual{\eD^1 v, \eD^1 v}_{2} + \dual{\eD^0\tau,v}_{1} - \dual{F,v}_1 \leq 0,
\]
and, letting $\epsilon \uparrow 0$,
\[
   \dual{\eD^1 v, \eD^1 v}_{2} + \dual{\eD^0\tau,v}_{1} - \dual{F,v}_1 \geq 0.
\]
Consequently, we deduce \eqref{eq:aha02a}.
Moreover, \eqref{eq:aha02b} is verified using
\[
 0 \geq \mathcal{S}(u, \sigma + \epsilon q) - \mathcal{S}(u,\sigma) = -\epsilon \dual{u, \eD^0\tau}_1.
\]
The converse is proved in the similar way.
\end{proof}
We have shown that the Hodge Laplacian problem can be formulated as a saddle point problem. After a discretization, the saddle point formulation be written as a matrix form,
\begin{equation}
\begin{bmatrix}
    \begin{array}{rr}
      \bm{A} & \bm{B}^T\\
      \bm{B} & \bm{0}
    \end{array}
\end{bmatrix}
\begin{bmatrix}
  \bm{u} \\
  \bm{\sigma} \\
\end{bmatrix}
=
\begin{bmatrix}
  \bm{F}\\
  0 \\
\end{bmatrix}
.
\end{equation}
Since the coefficient matrix is in general indefinite, a special solver is required.
There are many methods for solving matrix equations.
In this study, we employed the Arrorw-Hurwicz Algorithm, a kind of Uzawa-type iterative solution method developed in economics \cite{saad(2003),aha}.
\begin{center}
\textbf{Arrorw-Hurwicz Algorithm (AHA)}
  \begin{enumerate}
      \item Chose an initial guess $\bm{u}^0$ and $\bm{\sigma}^0$.
      \item For k=0,1,2,...., until convergence of iterative error Do:
      \item \ \ \ \ $\bm{\sigma}^{k+1} = \bm{\sigma}^k +\delta \bm{B} \bm{u}^k$
      \item \ \ \ \ $\bm{u}^{k+1} = \bm{u}^k +\omega(\bm{F} - \bm{A}\bm{u}^k - \bm{B}^T \bm{\sigma}^{k+1})$
      \item EndDo
  \end{enumerate}
\end{center}
The $\bm{A}$ matrix is the main part of the Laplacian and contains the scalar Laplacian and the vector Laplacian.
The $\bm{B}$ matrix is the term that represents $\Div A = 0$.
The potential $\phi$ term has degenerated.
Since $\bm{A}$ alone is an indefinite problem, it is an iterative method that oscillates toward the unique saddle point while taking $\Div A=0$ into account.
\section{Numerical examples}
\label{sec:example}
\subsection{Numerical Example 1}
We checked the mesh convergence of our proposed scheme under the problem of an exact solution in supper cubic.
$x\in[0,1],y\in[0,1],z\in[0,1],t \in [0,1]$.
The source data is following equation,
  \begin{gather}
  F(x,y,z,t) = \rho dx_0 + j_x dx_1 + j_y dx_2 + j_z dx_3 \notag \\
  = -  3 \pi^2 \sin (\pi x) \sin (\pi y) \sin (\pi z) \cos(2 \pi  t) dx_0 \notag\\
  +  3 \pi^2 \cos (\pi x) \sin (\pi y) \sin (\pi z) \sin(2 \pi t) dx_1 \notag \\
  +  3 \pi^2 \sin (\pi x) \cos (\pi y) \sin (\pi z) \sin(2 \pi  t) dx_2 \notag \\
  -  6 \pi^2 \sin (\pi x) \sin (\pi y) \cos (\pi z) \sin(2 \pi t) dx_3
  \end{gather}
The exact solution of this problem is bellow.
  \begin{align}
    u(x,y,z,t) = \phi dx_0 + A_x dx_1 + A_y dx_2 +  A_z dx_3\notag \\
     =\sin (\pi x) \sin (\pi y) \sin   (\pi z) \cos(2 \pi t) dx_0 \notag \\
     +\cos (\pi x) \sin (\pi y)\sin (\pi z) \sin(2 \pi t) dx_1 \notag \\
                   +\sin (\pi x) \cos (\pi y) \sin (\pi z) \sin(2 \pi t) dx_2\notag \\
                 -2\sin (\pi x) \sin (\pi y) \cos  (\pi z) \sin(2 \pi t)dx_3
  \end{align}
  This solution satisfied $\Div A=0$, the vector potentials are toward the boundary face's normal direction, and $\phi$ is equal to zero on the boundary face.
  Table \ref{ec} shows the dependency of mesh size of $L^2$ error, and the convergence order is $r_h$.
\begin{equation}
  E_h = \|u - u_h \|_1, r_h = \log(E^{n+1}/E^n) / \log(h_n/h_{n+1})
\end{equation}
\begin{table}[htbp]
\begin{center}
\begin{tabular}{c|c|c|c}
\hline
$N$ & $h$ & $E_h$ & $r_h$ \\
\hline
 \hline
20736  & 0.0833     & 0.15988 & \\
65536  & 0.0625     & 0.09132 & 1.94678\\
160000 & 0.05       & 0.05886 & 1.968276\\
331776 & 0.0041677  & 0.04104 & 1.977904\\
614656 & 0.0035714  & 0.03022 & 1.98531\\
 \hline
\end{tabular}
 \end{center}
\caption{Errors and convergence rates}
  \label{ec}
\end{table}
The convergence rate is visibly linear under the log-log plot.
The numerical example 1 shows optimal second-order convergence.
\subsection{Numerical Example 2}
We show a practical example for electromagnetic simulation.
Figure \ref{setting:example2} shows the problem setting of the virtual plasma source.
\begin{figure}[htbp]
 \centering
 \includegraphics[width=0.8\textwidth]{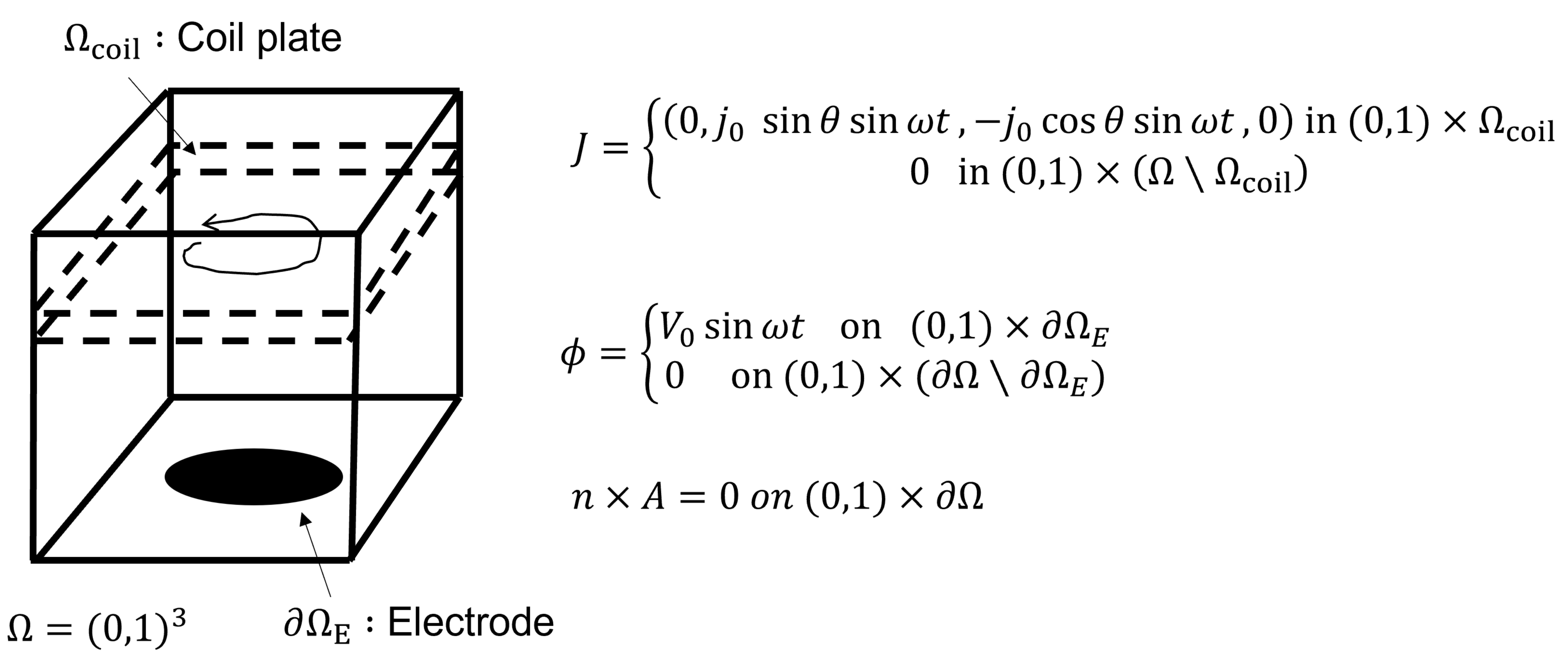}
\caption{problem setting of example 2}
\label{setting:example2}
\end{figure}
The calculation domain is $\Omega=[0,1]^3$ and $\theta = \tan^{-1}(y-0.5/x-0.5)$.
There is no plasma in this case, but it is a shape that could be used as a plasma source for Inductively coupled plasma(ICP) and capacitively coupled plasma(CCP).
The electric current($j_0=1$) flows around the coil plate, set at the height of z=2/3.
The electric charge doesn't exist.
The boundary condition of vector potential is metal boundary condition as $n\times A =0$.
The electrode is set on the bottom and applied the sinusoidal wave($V_0=100$).
The other boundary condition of the scalar is grounded($\phi=0$).
Our theoretical discussion was performed under the zero boundary condition. However, exaple2 is a more complex boundary condition because we can obtain a reasonable solution under the more practical checking problem setting.
Fig.\ref{aaa} and Fig.\ref{bbb} show the calculation results of the scalar and vector potential distribution by the cross-sectional view of x=0 under the coarse and fine mesh.
\begin{figure}[htbp]
 \centering
 \includegraphics[width=0.8\textwidth]{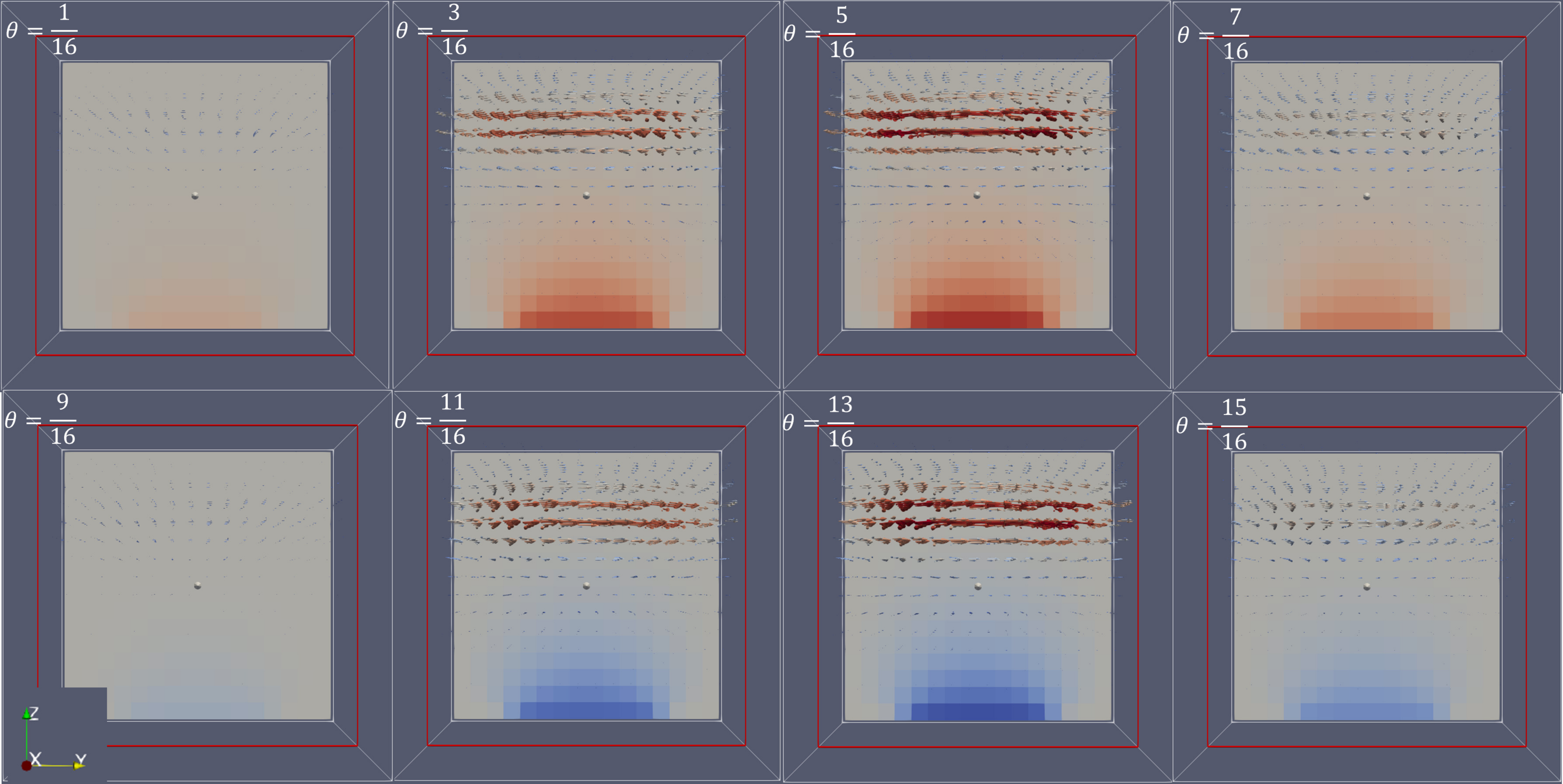}
\caption{cross sectional view (x = 0.5 plane) of the scalar and vector potential with coarse mesh}
  \label{aaa}
\end{figure}
\begin{figure}[htbp]
 \centering
 \includegraphics[width=0.8\textwidth]{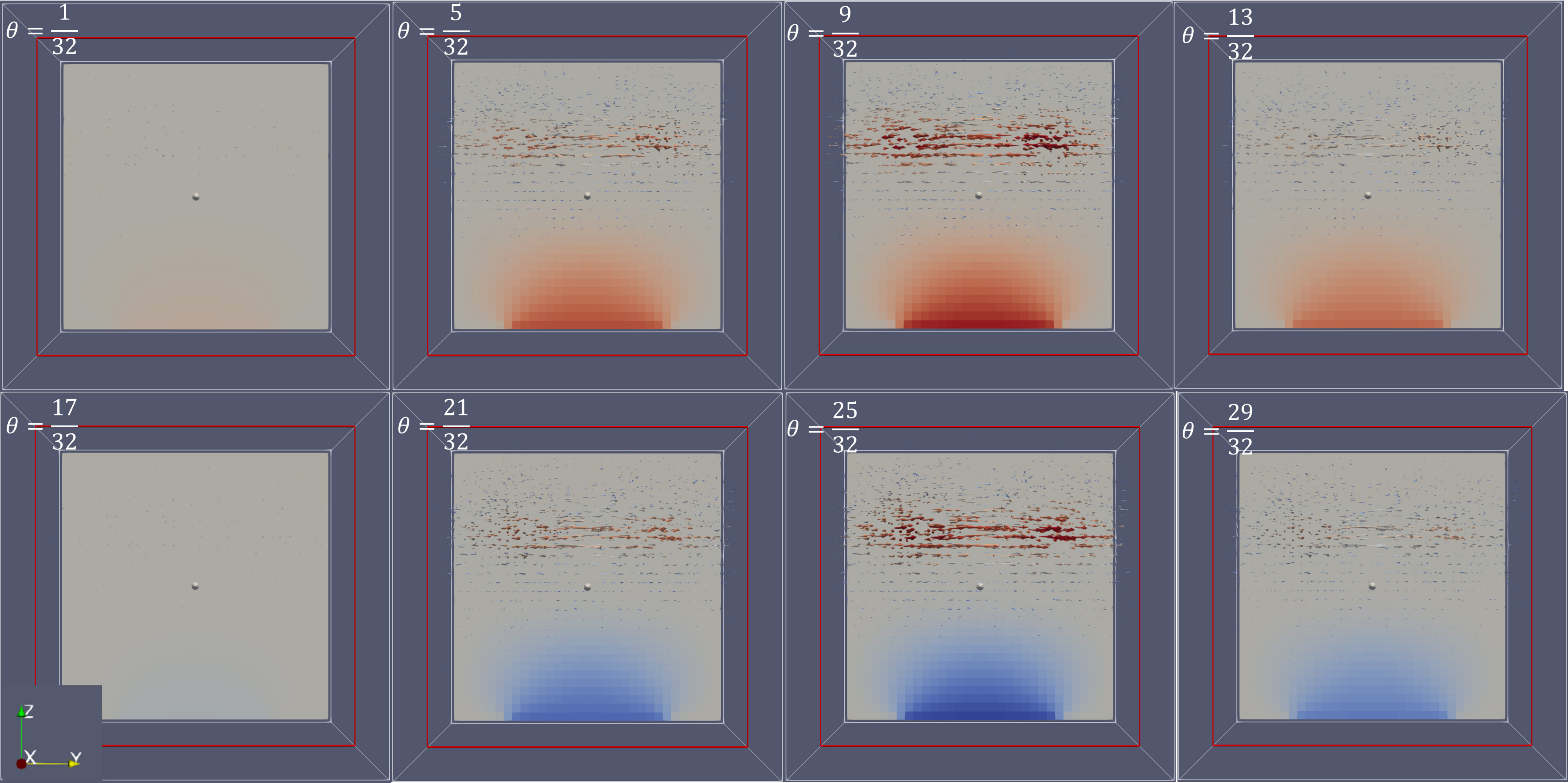}
\caption{cross sectional view (x = 0.5 plane) of the scalar and vector potential with fine mesh}
  \label{bbb}
\end{figure}
The 1st order(r=1) elements of space-time are used.
However, the visualization is performed as the average value of each element, and the scalar and the vector potential are separated.
It can be confirmed that the solution satisfies the discrete maximum principle with the maximum value at the electrode, depending on the change in the sine waveform given to the electrode.
As for the vector potential, a tendency was obtained that it is parallel to the current direction, and its absolute value rapidly decreases as it moves away from the coil plate.
Figures \ref{ccc} and \ref{ddd} show the calculated scalar and vector potentials on the (x=0) cross section under the coarse and fine meshes.
\begin{figure}[htbp]
 \centering
 \includegraphics[width=0.8\textwidth]{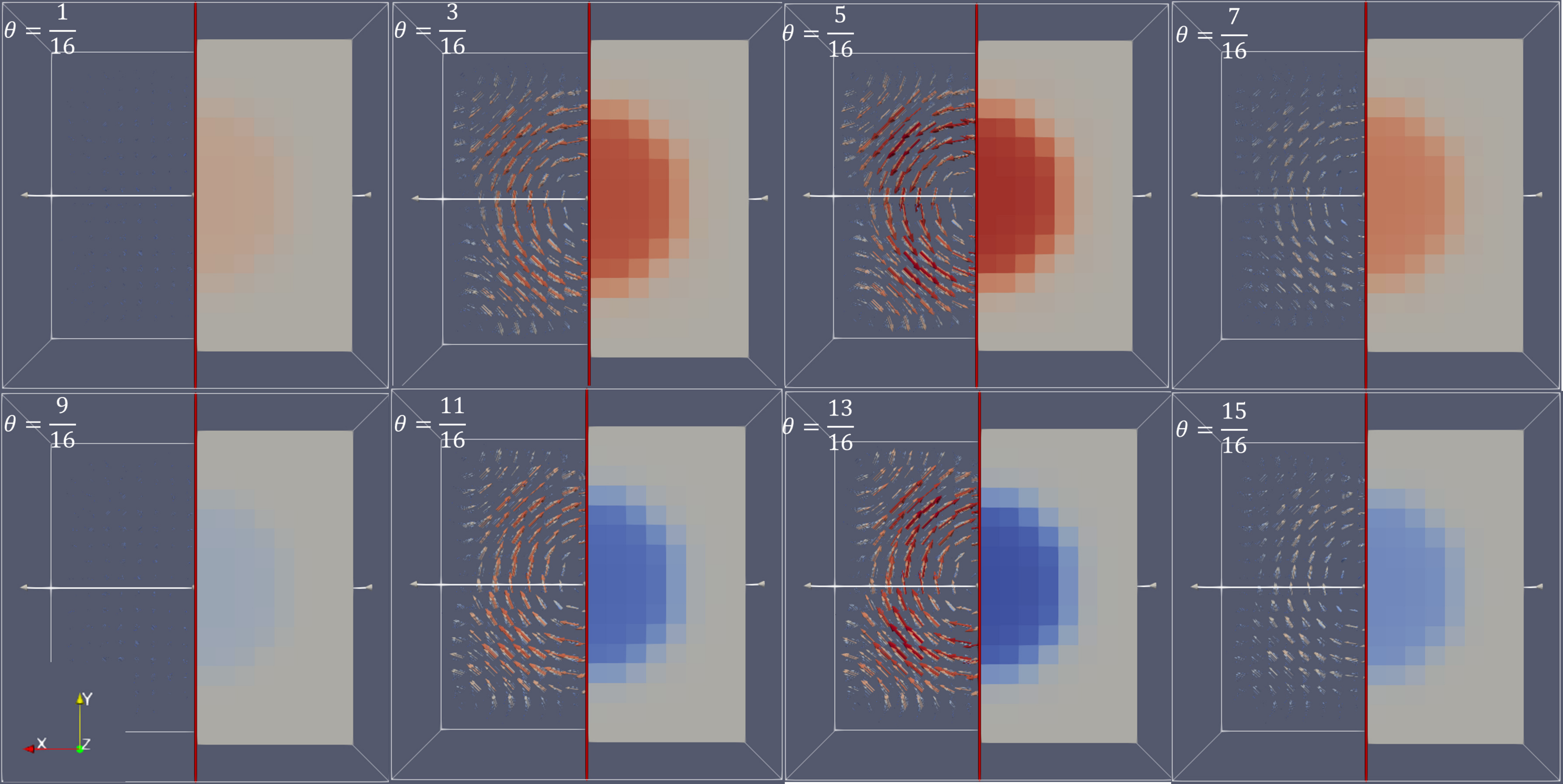}
\caption{cross sectional view (z = 0.5 plane) of the scalar and vector potential with coarse fine mesh}
\label{ccc}
\end{figure}
\begin{figure}[htbp]
 \centering
 \includegraphics[width=0.8\textwidth]{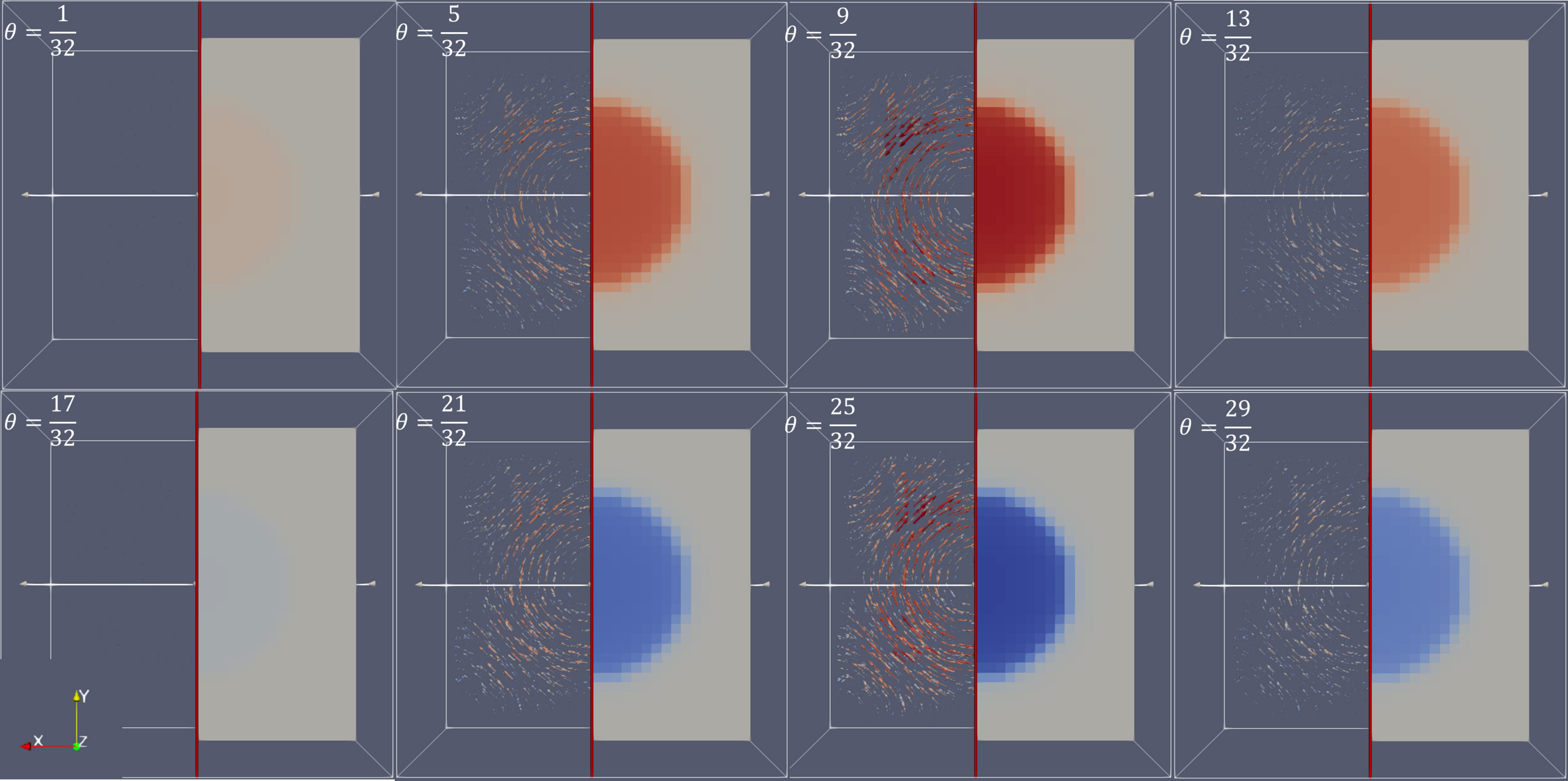}
\caption{cross sectional view (z = 0.5 plane) of the scalar and vector potential with coarse mesh}
\label{ddd}
\end{figure}
The results show that the vector potential is generated in a vortex.
In the boundary condition problem, the intensity of the solution increases from the center to the outside, but since the tangential direction of the boundary is zero and the vector value on the boundary has only a normal component as the metal boundary condition, the absolute value of the intensity decreases toward the normal direction as it moves outward.
\section{Conclulding Remarks}
\label{sec:conclusion}
In this paper, the periodic steady-state of electromagnetic fields is calculated using differential forms in 4-dimensional space-time.
In the conventional method, the scalar potential 0-form and the vector potential 1form are considered separately on the three-dimensional de Rham Complex.
In the proposed formulation, the scalar potential 0-form and the vector potential 1-form are treated simultaneously as 1-form in the 4-dimensional spacetime.
We consider a direct product space by the shifted differential $(k-1)$-form and the differential $k$-form and its Hilbert Complex.
And the proposed formulation is equivalent to the weakly mixed formulation of the Hodge-Laplacian in the  4-dimensional spacetime.
Then, the wellposedness of this 4-dimensional Hodge-Laplacian is shown using the FEEC framework.
For the function space of the discrete solution, we considered the direct product space of shifted $ $(k-1)$ $-form space(cubical element space) in the 3-dimension and k-form space(cubical element space).
It was also shown that this product space coincides with the 4-dimensional cubical element space.
The unbounded cochain map between a 4-dimensional complex and its approximate complex exists by using the approximate map from the cubical element spaces to the differential form space on the 3-dimensional space.
Then, we show the well-posedness of the formulation of the weakly mixed problem using the framework of FEEC theory.

We have tested the proposed method on two example problems.
The example 1, the exact solution exists, the discrete solution converges to the exact solution in optimal order by the proposed method.
The more concrete example2 shows that the proposed method can solve problems with non-zero boundary conditions for the scalar potential.
These results support our theoretical analysis and the usefulness of our proposed method.
In this paper, we have focused on a model in which the time derivative term is neglected.
In the future, it is expected that calculate Maxwell's equations with time terms taken into account and develop another periodic steady problem such as fluid fields
Furthermore, extend to the coupled problems of electromagnetic and fluid fields for plasma simulation.

\section*{Acknowledgement}
I'd like to thank for flutiful disscusion with Dr. Masaru Uchiyama.
\bibliography{mybibfile}
\end{document}